\definecolor{ao(english)}{rgb}{0.0, 0.5, 0.0}
\newtheorem{lemma}{Lemma}
\newtheorem{theorem}{Theorem}
\newtheorem{proposition}{Proposition}
\newtheorem{corollary}{Corollary}
\begin{document}
\title{Solving the membership problem for certain subgroups of $SL_2(\mathbb Z)$}

\author{Sandie Han, Ariane M. Masuda, Satyanand Singh, and Johann Thiel}
\date{\today}
\address{Department of Mathematics, New York City College of Technology, The City University of New York (CUNY), 300 Jay Street,
Brooklyn, New York 11201}
\email{\{shu.han51,ariane.masuda70,satyanand.singh03,johann.thiel30\}@citytech.cuny.edu}
\subjclass[2010]{Primary: 20H10; Secondary: 20E05, 20M05, 11A55}
\keywords{Matrix group, membership problem, index, continued fraction}
\thanks{The second author received support for this project provided by a PSC-CUNY award, jointly funded by The Professional Staff Congress and The City University of New York. }

\begin{abstract}
For positive integers $u$ and $v$, let $L_u=\begin{bmatrix}1 & 0 \\u&1\end{bmatrix}$ and $R_v=\begin{bmatrix}1 & v \\ 0 & 1\end{bmatrix}$. Let $G_{u,v}$ be the group generated by $L_u$ and $R_v$. In a previous paper, the authors determined a characterization of matrices $M=\begin{bmatrix}a & c \\b&d\end{bmatrix}$ in $G_{u,v}$ when $u,v\geq 3$ in terms of the short continued fraction representation of $b/d$. We extend this result to the case where $u+v> 4$. Additionally, we compute $[\mathscr{G}_{u,v}\colon G_{u,v}]$ for $u,v\geq 1$, extending a result of Chorna, Geller, and Shpilrain.
\end{abstract}
\maketitle

\section{Introduction}

For positive integers $u$ and $v$, let $L_u=\begin{bmatrix} 1 & 0 \\ u & 1 \end{bmatrix}$, $R_v=\begin{bmatrix} 1 & v \\ 0 & 1 \end{bmatrix}$, and $G_{u,v}$ be the group generated by $L_u$ and $R_v$. Using the notation from~\cite{EG1}, let
$$\mathscr{G}_{u,v} = \left\{\begin{bmatrix}1+uvn_1 & vn_2\\ un_3 & 1+uvn_4\end{bmatrix}\in SL_2(\mathbb{Z})\colon (n_1,n_2,n_3,n_4)\in\mathbb{Z}^4\right\}.$$ 
Note that $\mathscr{G}_{u,v}$ is a group and that $G_{u,v}\subseteq\mathscr{G}_{u,v}$\footnote{The case when $u,v\geq 2$ is handled in~\cite[Proposition 1.1]{HMST}, but the more general version stated here follows using the same argument.}.

Given a rational number $q$, if there exist integers $q_0,q_1,\dots,q_r$ (referred to as partial quotients) such that $$q = q_0+\cfrac{1}{q_1+\cfrac{1}{q_2+\ddots+\cfrac{1}{q_r }}},$$
then we refer to such an identity as a continued fraction representation of $q$ and denote it by $[q_0,q_1,\dots,q_r].$ We refer to the unique such representation where $q_i\geq 1$ for $0<i<r$ and $q_r>1$ for $r>0$ as {\em the} short continued fraction representation of $q$.

In~\cite{EG1}, Esbelin and Gutan gave the following clear characterization of members of $G_{k,k}$ when $k\geq 3$ in terms of related continued fraction representations.

\begin{theorem}[Esbelin and Gutan~\cite{EG1}]\label{egGp}
For an integer $k\geq 2$ and a matrix $M=\begin{bmatrix}a & b \\c & d\end{bmatrix}\in\mathscr{G}_{k,k}$,  $M\in G_{k,k}$ if and only if at least one of the rationals $c/a$ and $b/d$ has a continued fraction expansion having all partial quotients in $k\mathbb{Z}.$
\end{theorem}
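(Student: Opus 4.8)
The plan is to prove the two implications separately, in both cases using the classical correspondence between products of the matrices $L_k,R_k$ and continued fractions. Writing $S=\left[\begin{smallmatrix}0&1\\1&0\end{smallmatrix}\right]$, one has $\left[\begin{smallmatrix}c&1\\1&0\end{smallmatrix}\right]=R_cS$ and $SR_cS=L_c$, so the classical identity $\prod_{i=0}^{t}\left[\begin{smallmatrix}c_i&1\\1&0\end{smallmatrix}\right]=\left[\begin{smallmatrix}p_t&p_{t-1}\\ q_t&q_{t-1}\end{smallmatrix}\right]$, where $p_i/q_i$ are the convergents of $[c_0,\dots,c_t]$, translates for any alternating word $w=R_{c_0}L_{c_1}R_{c_2}\cdots$ into a formula for the second column of $w$ in terms of these convergents: up to truncating the last partial quotient when $w$ ends in an $L$-type factor, and inserting a leading $0$ when $w$ begins with an $L$-type factor, the second column $(b,d)$ of $w$ satisfies $b/d=[c_0,c_1,\dots]$. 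I will also use the automorphism $\sigma\colon M\mapsto SMS$ of $SL_2(\mathbb Z)$: it swaps $L_k$ and $R_k$ and preserves both $G_{k,k}$ and $\mathscr G_{k,k}$, and it interchanges the two rationals $b/d$ and $c/a$ associated to a matrix, so these may be treated symmetrically. With this dictionary in hand the forward implication is quick: given $M\in G_{k,k}$, write $M$ (merging consecutive powers of a common generator) as an alternating word $R_{kn_0}L_{kn_1}R_{kn_2}\cdots$ or the analogous word beginning with $L_k$ (the case $M=I$ being trivial since $b/d=0=[0]$); the dictionary exhibits $b/d$ as a continued fraction all of whose partial quotients lie in $k\mathbb Z$, the inserted or truncated entries being $0$. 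Hence at least one of $c/a,b/d$ has the stated form, the only work being a four-case bookkeeping over how the word starts and ends.

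For the reverse implication, let $M=\left[\begin{smallmatrix}a&b\\ c&d\end{smallmatrix}\right]\in\mathscr G_{k,k}$; applying $\sigma$ if necessary, I may assume it is $b/d$ that has a continued fraction $[c_0,\dots,c_s]$ with all $c_i\in k\mathbb Z$, and after the standard simplification $[\dots,x,0,y,\dots]=[\dots,x+y,\dots]$ I may assume $c_i\neq0$, so that $|c_i|\ge k\ge2$, for $1\le i\le s$. The crucial first step is a parity constraint. Since $\det M=1$ gives $\gcd(b,d)=1$, the fraction $b/d$ is in lowest terms, so $d=\pm q_s=\pm K_s(c_1,\dots,c_s)$ where $K_s$ denotes the continuant; reducing modulo $k$ and using $c_i\equiv0$ gives $q_s\equiv1\pmod k$ when $s$ is even and $q_s\equiv0\pmod k$ when $s$ is odd, whereas every matrix in $\mathscr G_{k,k}$ has lower-right entry $\equiv1\pmod{k^2}$, so $d\not\equiv0\pmod k$. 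Therefore $s$ must be even.

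The second step unwinds the continued fraction by left multiplication. Writing $c_i=ke_i$ and multiplying $M$ on the left successively by $R_k^{-e_0},L_k^{-e_1},R_k^{-e_2},\dots,R_k^{-e_s}$ replaces the second column $(b,d)$ first by $(b-c_0d,\,d)$, then by $(b-c_0d,\,d-c_1(b-c_0d))$, and so on; the recursion $[c_i,\dots,c_s]=c_i+1/[c_{i+1},\dots,c_s]$ together with the elementary bound $|[c_{i+1},\dots,c_s]|>1$ for $i<s$ shows that at each step the just-modified entry becomes strictly smaller in absolute value than the other, so after $s+1$ steps the second column is $(0,\pm1)$; here $s$ being even is exactly what makes the final ($R_k$-)step annihilate the top entry rather than the bottom one. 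Denoting by $W\in G_{k,k}$ the accumulated product, $WM\in\mathscr G_{k,k}$ (as $W\in G_{k,k}\subseteq\mathscr G_{k,k}$ and $\mathscr G_{k,k}$ is a group) has second column $(0,\pm1)$, and since $\det(WM)=1$ while the upper-left entry of any member of $\mathscr G_{k,k}$ is $\equiv1\pmod{k^2}$ and hence cannot be $-1$ when $k\ge2$, we get $WM=\left[\begin{smallmatrix}1&0\\ \gamma&1\end{smallmatrix}\right]$ with $\gamma\in k\mathbb Z$, that is $WM=L_k^{\gamma/k}$. Therefore $M=W^{-1}L_k^{\gamma/k}\in G_{k,k}$.

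I expect the reverse implication to be the main obstacle, and within it the interplay of two ingredients: the parity constraint --- the one place where the hypothesis $M\in\mathscr G_{k,k}$ (rather than merely $M\in SL_2(\mathbb Z)$) is genuinely used, and which I believe is the conceptual reason the characterization comes out so clean --- and the careful handling of the degenerate cases in the unwinding (a leading $0$, very short continued fractions, the signs) needed to conclude that the terminal matrix really is a power of $L_k$. The forward implication is, by comparison, a routine translation once the matrix/continued-fraction dictionary and its parity conventions are pinned down.
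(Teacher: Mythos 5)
The paper itself offers no proof of this statement: Theorem~\ref{egGp} is imported verbatim from Esbelin and Gutan~\cite{EG1} and used as a black box, so there is no internal argument to compare yours against. Judged on its own terms, your proof is correct. The forward direction is the standard dictionary between alternating words in $R_k$, $L_k$ and continued fractions, the same computation the paper invokes later when it asserts $b/d=[v\alpha_0,u\alpha_1,\dots,v\alpha_{r-1}]$ in the proof of Theorem~\ref{sanovlikeGp}. In the converse, the two points that genuinely require an idea are both present and sound: (i) the parity constraint, where $d=\pm K_s(c_1,\dots,c_s)$ together with $c_i\equiv 0\pmod k$ gives $d\equiv 0\pmod k$ for $s$ odd, while membership in $\mathscr{G}_{k,k}$ forces $d\equiv 1\pmod{k^2}$ and hence $d\not\equiv 0\pmod k$, so $s$ is even and the Euclidean descent annihilates the correct entry; and (ii) the identification of the terminal matrix, where $\det=1$ plus the congruence $a\equiv 1\pmod{k^2}$ excludes the upper-left entry $-1$ and the congruence on the lower-left entry makes the leftover unipotent matrix an honest power of $L_k$. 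The descent step is justified by $|[c_{i+1},\dots,c_s]|>1$, which holds because after deleting zero partial quotients every interior entry has $|c_i|\ge k\ge 2$; this is the one place the hypothesis $k\ge 2$ is used, and point (i) is the one place the hypothesis $M\in\mathscr{G}_{k,k}$ (rather than merely $M\in SL_2(\mathbb{Z})$) is used, exactly as you observe. The remaining soft spots are pure bookkeeping that you already flag (the leading zero for words beginning with $L_k$, termination of the simplification $[\dots,x,0,y,\dots]=[\dots,x+y,\dots]$ without creating an undefined tail), and none of them is a gap.
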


In~\cite{HMST}, we showed that Theorem~\ref{egGp} could be modified and written in terms of the short continued fraction representations of either $c/a$ or $b/d$, when $u,v\geq 3$. In particular, we developed a simple algorithm that, when applied to the short continued fraction representation of $b/d$, determines whether or not the sought after continued fraction expansion in Theorem~\ref{egGp} exists. Before stating our result, we need some additional definitions.

Let $A=\bigcup_{r=0}^\infty (\mathbb{Z}\times\mathbb{Z}_{\neq 0}^r)$. We denote an element of $A$ by $\llbracket q_0,q_1,\dots,q_r\rrbracket.$ Let $$-\llbracket q_0,q_1,\dots,q_r\rrbracket :=\llbracket -q_0,-q_1,\dots,-q_r\rrbracket.$$ For any nonnegative integers $m$ and $n$, let $$\llbracket  q_0,q_1,\dots,q_m\rrbracket\oplus\llbracket  p_0,p_1,\dots,p_n\rrbracket := \begin{cases}
\llbracket q_0,q_1,\dots,q_m,p_0,p_1,\dots,p_n\rrbracket & \text{ if }p_0\neq 0,\\
\llbracket q_0,q_1,\dots,q_m+p_1,p_2,\dots,p_n\rrbracket &\text{ otherwise.}
\end{cases}$$ 
Let 
\begin{align*}
    A_0 &= \{\llbracket q_0,q_1,\dots,q_r\rrbracket\in A\colon  [q_i,\dots,q_r]\neq 0\text{ when } 0<i<r\},\\
    A_1 &=\{\llbracket q_0,q_1,\dots,q_r\rrbracket\in A_0\colon q_i\geq 1\text{ when } 0<i<r,\text{ and } q_r>1\text{ when } r>0\}, \text{ and}\\
    A_2 &=\{\llbracket q_0,q_1,\dots,q_r\rrbracket\in A_0\colon |q_i|> 1 \text{ when } 0<i\leq r\}.
\end{align*} 
Define the function $C\colon\mathbb{Q}\to A_1$ by 
$$C(x)=\llbracket x_0,x_1,\dots,x_r\rrbracket$$ if $[x_0,x_1,\dots,x_r]$ is the short continued fraction representation of $x$. Define a function $f\colon A_1\to A_2$ recursively by
\begin{center}
\begin{align*}
&f(\llbracket q_0,q_1,\dots,q_r\rrbracket) \\
=&\begin{cases}
\llbracket q_0,q_1,\dots,q_r\rrbracket  &\text{ if $r=0$ or $q_i\neq 1$ for $0<i< r$,}\\
\llbracket q_0,q_1,\dots,q_{j-1}+1\rrbracket\oplus-f(\llbracket q_{j+1}+1,q_{j+2},\dots,q_r\rrbracket) &\text{ if $q_j=1$ and $q_i\neq 1$ for $0< i < j$.}
\end{cases}
\end{align*}
\end{center}
Lastly, we say that $\llbracket q_0,q_1,\dots,q_r\rrbracket\in A$ satisfies the $(u,v)$-divisibility property if $v|q_i$ when $i$ is even, and $u|q_i$ when $i$ is odd.

 We are now able to state the result from~\cite{HMST} that we intend to extend.

\begin{theorem}[Han et al.~\cite{HMST}]\label{sanovlikeGpOld}
For integers $u,v\geq 3$ and a matrix $M=\begin{bmatrix}
    a & b \\
    c & d
    \end{bmatrix}\in \mathscr{G}_{u,v}$, $M\in G_{u,v}$ if and only if $(f\circ C)(b/d)$ satisfies the $(u,v)$-divisibility property.
\end{theorem}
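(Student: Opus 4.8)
The plan is to prove the equivalence in three moves: a dictionary translating membership in $G_{u,v}$ into the existence of a continued fraction for $b/d$ with the $(u,v)$-divisibility property; the elementary observation that $(f\circ C)(b/d)$ is always such a candidate representation; and a uniqueness argument showing that if \emph{any} divisibility representation of $b/d$ exists then it must be the one produced by $f\circ C$. For the first move, any $M\in G_{u,v}$ is a reduced word $R_v^{a_0}L_u^{a_1}R_v^{a_2}\cdots$ (possibly beginning with $L_u$), and multiplying it out via the classical correspondence between products of the matrices $R_n=\begin{bmatrix}1&n\\0&1\end{bmatrix}$, $L_n=\begin{bmatrix}1&0\\n&1\end{bmatrix}$ and continued fraction convergents shows that $b/d$ has a representation $\llbracket q_0,q_1,\dots,q_r\rrbracket\in A$ with $q_i$ a nonzero multiple of $v$ for $i$ even and of $u$ for $i$ odd, i.e.\ one satisfying the $(u,v)$-divisibility property. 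Conversely, given such a representation of $b/d$, the matrix $N=R_{q_0}L_{q_1}R_{q_2}\cdots$ lies in $G_{u,v}$ (each factor is a power of $R_v$ or of $L_u$, by divisibility) and, by the dictionary, sends $0$ to the same point of $\mathbb{P}^1(\mathbb{Q})$ as $M$; hence $N^{-1}M\in\mathscr{G}_{u,v}$ fixes $0$, so its upper-right entry vanishes, and since $uv\geq 9$ forces an element of $\mathscr{G}_{u,v}$ of determinant $1$ with zero upper-right entry to have both diagonal entries equal to $1$, we get $N^{-1}M=L_u^{k}\in G_{u,v}$ and therefore $M\in G_{u,v}$. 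This yields the intermediate statement: for $M\in\mathscr{G}_{u,v}$ with $u,v\geq 3$, we have $M\in G_{u,v}$ if and only if $b/d$ admits a representation in $A$ satisfying the $(u,v)$-divisibility property (a refinement of Theorem~\ref{egGp} that always puts the condition on $b/d$). The care needed here is the parity bookkeeping --- whether $b/d$ or $a/c$ is read off as the full continued fraction depends on which side of the word one is on, and a word beginning with $L_u$ contributes a leading zero --- which is handled using that $\mathscr{G}_{u,v}$ contains no matrix sending $0$ to $\infty$ (again because $uv>2$) and that reduced words of length $\geq 2$ do not collapse.

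It remains to compare this intermediate condition with the assertion that $(f\circ C)(b/d)$ has the $(u,v)$-divisibility property. One direction is immediate: $C$ outputs the short continued fraction and $f$ is value-preserving --- the only identity involved being $[\dots,a,1,b,\text{rest}]=[\dots,a+1,-(b+1),-\text{rest}]$, with $\oplus$ respecting values --- and has image in $A_2$, so $(f\circ C)(b/d)$ is a representation of $b/d$ lying in $A_2\subseteq A$; if it has the divisibility property, the intermediate statement yields $M\in G_{u,v}$. For the other direction, suppose $b/d$ has some representation $\tau\in A$ with the divisibility property. Because $u,v\geq 3$ and every entry of $\tau$ past the first is nonzero, divisibility forces $|q_i|\geq 3$ for all $i\geq 1$, so $\tau\in A_2$; it therefore suffices to show $\tau=(f\circ C)(x)$ where $x=b/d$. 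I would do this by running the recursion defining $f$ in reverse on $\tau$: wherever $\tau$ exhibits a ``negative step'' $[\dots,A,B,\dots]$ with $B\leq -2$, rewrite it as $[\dots,A-1,1,-B-1,-\text{rest}]$ (together with the absorption $[\dots,X,0,Y,\dots]=[\dots,X+Y,\dots]$ when a zero is produced), and iterate. Since $\sum|q_i|$ strictly decreases, the expansion terminates, and the bound $|q_i|\geq 3$ for $i\geq 1$ is precisely what prevents a trailing $1$ or a genuinely collapsible segment from surviving, so the terminal object is a bona fide short continued fraction and hence equals $C(x)$ by uniqueness of short continued fractions. Applying $f$ to $C(x)$ then reverses the expansion step by step --- because $f$ always acts at the first interior $1$, exactly where the reverse process introduced one --- returning $\tau$. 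Hence $\tau=(f\circ C)(x)$, which completes the proof.

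I expect the last step --- making the reverse-normalization precise, in particular proving that it always terminates exactly at the short continued fraction rather than at some other all-positive form --- to be the main obstacle, and it is also where the hypothesis $u,v\geq 3$ is genuinely used. A partial quotient equal to $\pm 2$ (which divisibility excludes when $u,v\geq 3$ but not when $u=2$ or $v=2$) behaves as a collapsible segment via $[\dots,a,\pm 2,\dots]=[\dots,a\mp 1,\pm 1,\pm 1,\dots]$, and it is exactly such segments that would sever the correspondence between divisibility representations and the canonical form $f\circ C$; this is consistent with the fact that the forthcoming extension must exclude $uv=4$. Everything else reduces to bookkeeping on top of the description of $G_{u,v}$ as a group of words and the standard continuant identities.
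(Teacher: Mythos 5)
Your proposal is correct and follows essentially the same route as the paper: your ``reverse normalization'' is precisely the map $g$ of~\cite{HMST} (the analogue of $g_{u,v}$ here), your uniqueness step is Proposition 2.9 of~\cite{HMST} (here Proposition~\ref{fuvguv}, i.e.\ $f\circ g=\mathrm{id}$ on divisibility-satisfying elements of $A_2$), and the dictionary with alternating products together with value-preservation of $f$ is exactly how the two directions are handled both in the cited proof and in the proof of Theorem~\ref{sanovlikeGp}. The only slip is that $\sum_i|q_i|$ need not strictly decrease under your rewrite (it can grow when the entry preceding the negative one is nonpositive); termination instead follows because each rewrite recurses on a strictly shorter tail, which is how $g$ is defined.
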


A careful review of the function $f$ shows that it seeks to eliminate 1's appearing in the short continued fraction representation of $b/d$. When $u,v\geq 3$, if $M\in G_{u,v}$, any such 1's in the short continued fraction representation of $b/d$ encode where the exponents in the alternating product representation\footnote{Note that $G_{u,v}$ is freely generated when $u,v\geq 2$ (see~\cite{N1}).} of $M$ in terms of $L_u$ and $R_v$ change sign. The price paid for eliminating 1's in this way is that the adjacent partial quotients are modified using Lemma~\ref{firstid} below. The reason that $f$ cannot be applied in the case where either $u=2$ or $v=2$ is that the continued fraction representation of $b/d$ may contain consecutive 1's, not all of which correspond to sign changes. Haphazardly removing the first 1 encountered, as $f$ does, can lead to incorrect results. 

We must therefore introduce an alternative that is sensitive to the existence \emph{and} location of 1's in the continued fraction representation of $b/d$. This necessitates the introduction of a new family of functions designed to supersede $f$ in the more general setting of $u+v>4.$ Our motivation for this new family of functions is based on the following general principle. Let $M=\begin{bmatrix}a & b \\c & d\end{bmatrix}$ and $b/d = [q_0,q_1,\dots,q_r]$. Multiplication by $L_u$ and $R_v$ changes the continued fraction representation of $b/d$ in a predictable way (see Lemma~\ref{cf}). Therefore, any matrix $M\in G_{u,v}$ must have entries $b$ and $d$ such that $b/d$ has a continued fraction representation with very specific properties. Any such deviation would immediately lead one to conclude that $M$ is not in the group. The main point is that the continued fraction representation of $b/d$ given by the division algorithm is closely connected to the one obtained by tracing the product of $L_u$'s and $R_v$'s. Our goal is to find all of the (finite) possible ways in which the former can be transformed into the latter.

In~\cite{S}, Sanov shows that $G_{2,2}=\mathscr{G}_{2,2}$ and in~\cite{CGS}, Chorna, Geller, and Shpilrain show that $[\mathscr{G}_{k,k}\colon G_{k,k}]=\infty$ for $k\geq 3$. In this paper we  will show that for $u,v\geq 1$,
\begin{align*}
[\mathscr{G}_{u,v}\colon G_{u,v}]=
\begin{cases}
1 & u+v\leq 4,\\
\infty &\text{otherwise,}
\end{cases}
\end{align*}
by extending the techniques developed for the proof of Theorem~\ref{sanovlikeGpOld} that also led to an alternate proof of Sanov's result in~\cite{HMST}.

\section{Results}

We begin with some useful identities whose proofs can be shown algebraically.

\begin{lemma}\label{firstid}
Suppose that $\alpha,\beta,\gamma\in\mathbb{Z}$ are such that all quantities below are well-defined. Then 
\begin{enumerate}
    \item[(a)] $[\alpha,1,\beta,\gamma]=[\alpha+1,-(\beta+1),-\gamma],$
    \item[(b)] $[\alpha,\beta,\gamma]=[\alpha+1,-1,-(\beta-1),-\gamma],\text{ and}$
    \item[(c)] $[\alpha,0,\gamma]=[\alpha+\gamma].$ 
\end{enumerate}
\end{lemma}

\begin{lemma}\label{secondid}
Suppose that $\alpha,\beta,\gamma\in\mathbb{Z}$ with $\beta>0$ and $\gamma\neq -1,0$. Then 
\begin{align*}
[\alpha,\beta,\gamma] &=[\alpha, \underbrace{1,0,1,0,\dots,0,1}_{2\beta-1\text{ terms}},\gamma]\\
&= [\alpha+1,\underbrace{-2,2,-2,\dots,\pm 2}_{\beta-1\text{ terms}},(-1)^\beta(\gamma+1)].
\end{align*}
\end{lemma}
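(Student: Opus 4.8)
The plan is to establish the two displayed equalities in turn, each by a short induction on $\beta$, and then to read off the case $\beta=2$ by substitution. Throughout I would use only that $[-a_0,-a_1,\dots,-a_k]=-[a_0,a_1,\dots,a_k]$, that $[0,x]=1/x$, and that for an integer $\gamma\neq 0,-1$ and an integer $k\geq 1$ one has $[k,\gamma]=k+1/\gamma\neq 0$; in particular, under the stated hypothesis $\gamma\neq 0,-1$, none of the continued fractions written below has a vanishing denominator, as I would check along the way.

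For the first equality I would prove by induction on $\beta\geq 1$ that
\[
\Bigl[\,\underbrace{1,0,1,0,\dots,0,1}_{2\beta-1\text{ terms}},\,\gamma\,\Bigr]=[\beta,\gamma].
\]
The case $\beta=1$ is immediate. For the step, peel off the leading pair $1,0$ and apply $[0,x]=1/x$ together with the inductive hypothesis:
\begin{align*}
\Bigl[\,\underbrace{1,0,\dots,0,1}_{2\beta+1\text{ terms}},\gamma\,\Bigr]
&=1+\Bigl[\,\underbrace{1,0,\dots,0,1}_{2\beta-1\text{ terms}},\gamma\,\Bigr]\\
&=1+[\beta,\gamma]=[\beta+1,\gamma].
\end{align*}
Prepending $\alpha$ then gives $\bigl[\alpha,\underbrace{1,0,\dots,0,1}_{2\beta-1},\gamma\bigr]=\alpha+1/[\beta,\gamma]=[\alpha,\beta,\gamma]$, which is the first equality.

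For the second equality it suffices to prove the following claim: for all integers $\beta\geq 1$ and $\gamma\neq 0,-1$,
\[
\Bigl[\,\underbrace{-2,2,-2,\dots,\pm 2}_{\beta-1\text{ terms}},\,(-1)^{\beta}(\gamma+1)\,\Bigr]=\frac{\beta\gamma+1}{(1-\beta)\gamma-1}.
\]
Granting the claim and using $[\beta,\gamma]=(\beta\gamma+1)/\gamma$, one computes directly that
\begin{align*}
\Bigl[\,\alpha+1,\underbrace{-2,\dots,\pm 2}_{\beta-1\text{ terms}},(-1)^{\beta}(\gamma+1)\,\Bigr]
&=\alpha+1+\frac{(1-\beta)\gamma-1}{\beta\gamma+1}\\
&=\alpha+\frac{\gamma}{\beta\gamma+1}=[\alpha,\beta,\gamma],
\end{align*}
which is the second equality. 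I would prove the claim by induction on $\beta$. For $\beta=1$ its left side is the single entry $-(\gamma+1)$ and its right side is $(\gamma+1)/(-1)$, so they agree — this $\beta=1$ case is exactly Lemma~\ref{firstid}. For the step, remove the leading $-2$ from the string of $\beta$ twos; the remaining $\beta-1$ twos begin with $2$, so negating every entry of that tail (via $[-a_0,\dots]=-[a_0,\dots]$) produces precisely the left side of the claim for parameter $\beta$ (whose block has $\beta-1$ terms) with final entry $(-1)^{\beta}(\gamma+1)$, whose value the inductive hypothesis supplies; substituting it and simplifying then gives $\bigl(-(\beta+1)\gamma-1\bigr)/(\beta\gamma+1)$, which is the right side of the claim for $\beta+1$. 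En route one notes that $\beta\gamma+1\neq 0$ and $(1-\beta)\gamma-1\neq 0$, using $\beta\geq 1$ and $\gamma\neq 0,-1$. Finally, combining the two equalities and setting $\beta=2$ — where the $\pm2$-block is the single entry $-2$ and $(-1)^2(\gamma+1)=\gamma+1$ — yields $[\alpha,2,\gamma]=[\alpha+1,-2,\gamma+1]$.

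The manipulations are all routine; the only point needing care is the sign-and-parity bookkeeping in the claim, namely tracking how negating a tail reverses the alternating pattern of $2$'s and shifts the exponent of $-1$ on the final entry, and confirming that the hypothesis $\gamma\neq 0,-1$ is exactly what keeps every continued fraction (and the rational function on the right-hand side of the claim) well defined. I do not expect any genuine obstacle beyond this.
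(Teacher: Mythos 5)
Your argument is correct, and all the details check out: the telescoping identity $[1,0,x]=1+x$ gives the first display; your claim
$[\underbrace{-2,2,\dots,\pm2}_{\beta-1},(-1)^{\beta}(\gamma+1)]=\dfrac{\beta\gamma+1}{(1-\beta)\gamma-1}$
is verified by the induction you describe (I confirmed the base case, the step $-2+\frac{(\beta-1)\gamma+1}{\beta\gamma+1}=\frac{-(\beta+1)\gamma-1}{\beta\gamma+1}$, and that $\beta\gamma+1$ and $(1-\beta)\gamma-1$ never vanish for integer $\gamma\neq 0,-1$); and the final simplification recovers $\alpha+\gamma/(\beta\gamma+1)=[\alpha,\beta,\gamma]$. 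Note, however, that this paper supplies no proof of Lemma~\ref{secondid} at all: it is stated as a corrected version of Lemma 3.5 of~\cite{HMST} (fixing the sign $(-1)^{\beta}$ on the last entry), with the proof deferred to that earlier paper. So there is no in-paper argument to compare against; your self-contained double induction is a legitimate verification, and it has the additional merit of making explicit exactly where the hypothesis $\gamma\neq 0,-1$ is used and of exhibiting the $\beta=1$ case as precisely Lemma~\ref{firstid}, which is consistent with the corrected sign in the statement.
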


Note that Lemma~\ref{secondid} represents a correction of Lemma 3.5 in~\cite{HMST} where the $(-1)^\beta$ term was mistakenly given as $-1.$ The correction does not invalidate the alternate proof of Sanov's result given in~\cite{HMST}, however the previous incorrect version can give product representations for matrices in $G_{2,2}$ whose exponents are off by a sign.

By Theorem 1 in~\cite{CJR} it is clear that all subsequent results that hold with $v\geq u$ also hold with $u > v$. We will assume that $v\geq u$ moving forward unless stated otherwise.

\subsection{A New Family of Functions}

Before we introduce a new family of functions to supersede $f$, we must expand some of our earlier definitions. This is done to avoid having to complicate matters later by having to consider a large number of special cases.

Let $A^\ast = A\cup \{\lambda\}$ where $\lambda$ is a formal symbol playing the role of, say, an empty vector. That is, we regard $\lambda$ to be shorthand for $\llbracket\phantom{.}\rrbracket.$ Similarly, we define $A_i^\ast = A_i\cup\{\lambda\}$ for $i=0,1,2$. We also say that $-\lambda = \lambda$, and for any element $\llbracket q_0,q_1,\dots,q_r\rrbracket$ in $A^\ast$, let
\begin{align*}
    \lambda \oplus \llbracket q_0,q_1,\dots,q_r\rrbracket &= \llbracket q_0,q_1,\dots,q_r\rrbracket,\\
    \llbracket q_0,q_1,\dots,q_r\rrbracket \oplus \lambda &= \llbracket q_0,q_1,\dots,q_r\rrbracket,\text{ and}\\
    \lambda \oplus \lambda &= \lambda.
\end{align*}

Let $A_3 =\{\llbracket q_0,q_1,\dots,q_r\rrbracket\in A_0\colon q_i\neq 0 \text{ when } 0<i\leq r\}$ and $A_3^\ast = A_3 \cup \{\lambda\}.$
For $u+v>4$, define $f_{u,v}\colon A_1^\ast\to A_3^\ast$ recursively by $f_{u,v}(\lambda) = \lambda$ and

\begin{center}
\begin{align*}
&f_{u,v}(\llbracket q_0,q_1,\dots,q_r\rrbracket) \\
=&\begin{cases}
\llbracket q_0\mp 1,\pm 1\rrbracket &\text{ if $r=0$, $q_0\equiv \pm 1\bmod{v}$, and $u=1$,} \\
\llbracket q_0+1\rrbracket\oplus -f_{v,u}(\llbracket q_2+1,q_3,\dots,q_r\rrbracket) &\text{ if $q_0\equiv -1\bmod{v}$ and $q_1=1$,} \\
\llbracket q_0- 1, 1\rrbracket\oplus -
f_{u,v}(\llbracket q_1+ 1,q_2,\dots,q_r\rrbracket) &\text{ if $u=1$, $q_0\equiv 1\bmod{v}$, and $q_1\geq 1$}, \\
\llbracket q_0+ 1,- 1\rrbracket\oplus -
f_{u,v}(\llbracket q_1- 1,q_2,\dots,q_r\rrbracket) &\text{ if $u=1$, $q_0\equiv -1\bmod{v}$, and $q_1> 1$},\\
\llbracket q_0+2,-1\rrbracket\oplus f_{u,v}(\llbracket q_2+2,q_3,\dots,q_r\rrbracket) &\text{ if $u=1$, $q_0\equiv -2\bmod{v}$, and $q_1=1$,} \\
\llbracket q_0+2,-1\rrbracket\oplus f_{u,v}(\llbracket 2, 1, q_2,\dots,q_r\rrbracket) &\text{ if $u=1$, $q_0\equiv -2\bmod{v}$, and $q_1=2$,} \\
\llbracket q_0+1\rrbracket\oplus - f_{v,u}(\llbracket 1, 1, q_2,\dots,q_r\rrbracket) &\text{ if $u=2$, $q_0\equiv -1\bmod{v}$, and $q_1=2$}, \\
\llbracket q_0\rrbracket \oplus f_{v,u}(\llbracket q_1,\dots,q_r\rrbracket) &\text{ otherwise.}
\end{cases}
\end{align*}
\end{center}

\noindent Define $g\colon A_3^\ast\to A_3^\ast$ recursively by $g(\lambda) = \lambda$ and 
\begin{align*}
g(\llbracket q_0,q_1,q_2,\dots,q_r\rrbracket)
=&\begin{cases}
\llbracket q_0\pm1\rrbracket&\text{ if $r=1$ and $q_1=\pm 1$,}\\
\llbracket q_0-1,2\rrbracket&\text{ if $r=1$ and $q_1=-2$,}\\
\llbracket q_0-1,1\rrbracket \oplus g(-\llbracket q_1+1,q_2,\dots,q_r\rrbracket)&\text{ if $q_1<-2$ or both $r>1$ and $q_1=-2$,}\\
g(\llbracket q_0-1\rrbracket \oplus -\llbracket q_2-1,q_3,\dots,q_r\rrbracket)&\text{ if $r>1$ and $q_1=-1$},\\
\llbracket q_0\rrbracket \oplus g(\llbracket q_1,\dots,q_r\rrbracket)&\text{ otherwise.}
\end{cases}
\end{align*}
Let $E:A_0\to\mathbb{Q}$ be given by $$E(\llbracket q_0,q_1,\dots,q_r\rrbracket) = q_0+\cfrac{1}{q_1+\cfrac{1}{q_2+\ddots+\cfrac{1}{q_r }}}.$$

\begin{lemma}[{\cite[Lemma 2.3]{HMST}}]\label{Eid}
For $a,b\in A_0$ with $a=\llbracket q_0,q_1,\dots,q_r\rrbracket$ and $b=\llbracket p_0,p_1,\dots,p_s\rrbracket$, 
$$E(a\oplus b) = 
q_0+\cfrac{1}{q_1+\ddots+\cfrac{1}{q_r+\cfrac{1}{E(b)}}}
$$
or equivalently,
$$E(a\oplus b) = 
\begin{cases}
[ q_0,q_1,\dots,q_r,p_0,p_1,\dots,p_s ] &\text{ if } p_0\neq 0,\\
[ q_0,q_1,\dots,q_r+p_1,p_2,\dots,p_s ] &\text{ if } p_0=0.
\end{cases}
$$
Also, $E(-a)=-E(a)$.
\end{lemma}

Note that Lemma~\ref{Eid} is a correction of~\cite[Lemma 2.3]{HMST}. Namely, the original statement gave the wrong formula in the case where $p_0=0$. However, this case went unused in that paper and therefore did not introduce any errors. In the results that follow we will make use of the $p_0=0$ case.

The following corollaries follow from Lemmas~\ref{firstid},~\ref{secondid}, and~\ref{Eid}. In particular, we show that the definitions for $f_{u,v}$ and $g$ are such that $E\circ f_{u,v}\circ C$ and $E\circ g\circ C$ act as the identity map.

\begin{corollary}\label{efuv}
For all $\llbracket q_0,q_1,\dots,q_r\rrbracket\in A_1$, $E(\llbracket q_0,q_1,\dots,q_r\rrbracket) = (E\circ f_{u,v})(\llbracket q_0,q_1,\dots,q_r\rrbracket).$
\end{corollary}

\begin{proof}
We will prove the desired result by strong induction on $r$. 

Suppose that $r=0$. Then the result follows immediately for all possible pairs of $u$ and $v$ from the definitions of $E$ and $f_{u,v}$ since  $f_{u,v}(\llbracket q_0\rrbracket)$ is equal to either $\llbracket q_0\rrbracket$ or $\llbracket q_0\mp1,\pm1\rrbracket$. In either case we have $q_0=(E\circ f_{u,v})(\llbracket q_0\rrbracket)$.

Suppose that the result holds for $0\leq r\leq t$ for some $t\geq 0$ and all possible pairs of $u$ and $v$.

\noindent{\bf Case 1:} Suppose $q_0\equiv -1\bmod{v}$ and $q_1=1$. Then
\begin{align*}
    &(E\circ f_{u,v})(\llbracket q_0,q_1,\dots,q_{t+1}\rrbracket)\\
    =& E(\llbracket q_0+1\rrbracket\oplus -f_{v,u}(\llbracket q_2+1,q_3,\dots,q_{t+1}\rrbracket))\text{ by the definition of $f_{u,v}$}\\
    =& q_0+1 + \frac{1}{-(E\circ f_{v,u})(\llbracket q_2+1,q_3,\dots,q_{t+1}\rrbracket)}\text{ by Lemma~\ref{Eid}}\\
    =& [q_0+1, -(q_2+1),-q_3,\dots,-q_{t+1}]\text{ by the induction hypothesis}\\
    =& [q_0,1,q_2,q_3,\dots,q_{t+1}]\text{ by Lemma~\ref{firstid}(a)}\\
    =& E(\llbracket q_0,q_1,\dots,q_{t+1}\rrbracket).
\end{align*}

\noindent{\bf Case 2:} Suppose $u=1$, $q_0\equiv 1\bmod{v}$, and $q_1\geq 1$. Then
\begin{align*}
    &(E\circ f_{u,v})(\llbracket q_0,q_1,\dots,q_{t+1}\rrbracket)\\
    =& E(\llbracket q_0- 1, 1\rrbracket\oplus -f_{u,v}(\llbracket q_1+ 1,q_2,\dots,q_{t+1}\rrbracket))\text{ by the definition of } f_{u,v}\\
    =& q_0- 1+\cfrac{1}{1+\cfrac{1}{-(E\circ f_{u,v})(\llbracket q_1+1,q_2,\dots,q_{t+1}\rrbracket)}}\text{ by Lemma~\ref{Eid}}\\
    =&[q_0- 1, 1, -(q_1+ 1), -q_2, -q_3,\dots, -q_{t+1}] \text{ by the induction hypothesis} \\
     =&[q_0,q_1, q_2,q_3,\dots,q_{t+1}]\text{ by Lemma~\ref{firstid}(a)} \\
    =& E(\llbracket q_0,q_1,\dots,q_{t+1}\rrbracket).
\end{align*}

\noindent{\bf Case 3:} Suppose $u=1$, $q_0\equiv -1\bmod{v}$, and $q_1> 1$. Then
\begin{align*}
    &(E\circ f_{u,v})(\llbracket q_0,q_1,\dots,q_{t+1}\rrbracket)\\
    =& E(\llbracket q_0+1, -1\rrbracket\oplus -f_{u,v}(\llbracket q_1- 1,q_2,\dots,q_{t+1}\rrbracket))\text{ by the definition of } f_{u,v}\\
    =& q_0+1+\cfrac{1}{-1+\cfrac{1}{-(E\circ f_{u,v})(\llbracket q_1-1,q_2,\dots,q_{t+1}\rrbracket)}}\text{ by Lemma~\ref{Eid}}\\
    =&[q_0+1, -1, -(q_1- 1), -q_2, -q_3,\dots, -q_{t+1}] \text{ by the induction hypothesis} \\
     =&[q_0,q_1, q_2,q_3,\dots,q_{t+1}]\text{ by Lemma~\ref{firstid}(b)} \\
    =& E(\llbracket q_0,q_1,\dots,q_{t+1}\rrbracket).
\end{align*}

\noindent{\bf Case 4:} Suppose $u=1$, $q_0\equiv -2\bmod{v}$, and $q_1=1$. Then
\begin{align*}
    &(E\circ f_{u,v})(\llbracket q_0,q_1,\dots,q_{t+1}\rrbracket)\\
    =& E(\llbracket q_0+2,-1\rrbracket\oplus f_{u,v}(\llbracket q_2+2,q_3,\dots,q_{t+1}\rrbracket)\text{ by the definition of $f_{u,v}$}\\
    =& q_0+2+\cfrac{1}{-1+\cfrac{1}{(E\circ f_{u,v})(\llbracket q_2+2,q_3,\dots,q_{t+1}\rrbracket)}}\text{ by Lemma~\ref{Eid}}\\
    =& [q_0+2, -1, q_2+2, q_3,\dots, q_{t+1}]\text{ by the induction hypothesis}\\
    =& [q_0+1,-(q_2+1),-q_3,\dots,-q_{t+1}]\text{ by Lemma~\ref{firstid}(b)}\\
    =& [q_0, 1,q_2,q_3,\dots,q_{t+1}]\text{ by Lemma~\ref{firstid}(a)}\\
    =& E(\llbracket q_0,q_1,\dots,q_{t+1}\rrbracket).
\end{align*}

\noindent{\bf Case 5:}
Suppose $u=1$, $q_0\equiv -2\bmod v$, and $q_1=2$. Then
\begin{align*}
    &(E\circ f_{u,v})(\llbracket q_0,q_1,\dots,q_{t+1}\rrbracket)\\
    =& E(\llbracket q_0+2,-1\rrbracket\oplus f_{u,v}(\llbracket 2,1,q_2,q_3,\dots,q_{t+1}\rrbracket))\text{ by the definition of } f_{u,v}\\
    =& q_0+2+\cfrac{1}{-1+\cfrac{1}{2+\cfrac{1}{1+\cfrac{1}{(E\circ f_{u,v})(\llbracket q_2,q_3,\dots,q_{t+1}\rrbracket)}}}}\text{ by Lemma~\ref{Eid}}\\
    =&[q_0+2,-1, 2, 1, q_2,q_3,\dots,q_{t+1}] \text{ by the induction hypothesis} \\
     =&[q_0+1,-1, -1, -q_2,-q_3,\dots,-q_{t+1}]\text{ by Lemma~\ref{firstid}(b)} \\
     =&[q_0,2,  q_2,q_3,\dots,q_{t+1}]\text{ by Lemma~\ref{firstid}(b)} \\
    =& E(\llbracket q_0,q_1,\dots,q_{t+1}\rrbracket).
\end{align*}

 \noindent{\bf Case 6:} Suppose $u=2$, $q_0\equiv -1 \bmod v$, and $q_1=2$. Then
\begin{align*}
    &(E\circ f_{u,v})(\llbracket q_0,q_1,\dots,q_{t+1}\rrbracket)\\
    =& E(\llbracket q_0+1\rrbracket\oplus -f_{v,u}(\llbracket 1,1,q_2,q_3,\dots,q_{t+1}\rrbracket)\text{ by the definition of $f_{u,v}$}\\
    =& q_0+1 + \dfrac{1}{-(E\circ f_{v,u})(\llbracket 1,1,q_2,q_3,\dots,q_{t+1}\rrbracket)}\text{ by Lemma~\ref{Eid}}\\
    =& [ q_0+1, -1,-1,-q_2,-q_3,\dots,-q_{t+1}]\text{ by the induction hypothesis}\\
    =& [ q_0,2,q_2,q_3,\dots,q_{t+1}]\text{ by Lemma~\ref{firstid}(b)}\\
    =& E(\llbracket q_0,q_1,\dots,q_{t+1}\rrbracket).
\end{align*}

The remaining case is a triviality. Having exhausted all possibilities, the result follows by induction.
\end{proof}

\begin{corollary}\label{eguv}
For all $\llbracket q_0,q_1,\dots,q_r\rrbracket\in A_3$, $E(\llbracket q_0,q_1,\dots,q_r\rrbracket) = (E\circ g)(\llbracket q_0,q_1,\dots,q_r\rrbracket).$
\end{corollary}
\begin{proof}
We will prove the desired result by strong induction on $r$.

Suppose that $r = 0$. Then the result follows immediately since $g(\llbracket q_0\rrbracket) = q_0$.

Suppose that the result holds for $0\leq r\leq t$ for some $t\geq 0$.

\noindent{\bf Case 1:} Suppose $q_1 < -1$. Then
\begin{align*}
&(E\circ g)(\llbracket q_0,q_1,\dots,q_{t+1}\rrbracket)\\
= & E(\llbracket q_0-1,1\rrbracket \oplus g(-\llbracket q_1+1,q_2,\dots,q_{t+1}\rrbracket) \text{ by the definition of $g$}\\
=& q_0-1+\cfrac{1}{1+\cfrac{1}{(E\circ g)(-\llbracket q_1+1,q_2,\dots,q_{t+1}\rrbracket)}}  \text{ by Lemma~\ref{Eid}}\\
=& [q_0-1,1,-(q_1+1),-q_2\dots,-q_{t+1}] \text{ by the induction hypothesis}\\
=& [q_0,q_1,\dots,q_{t+1}] \text{ by Lemma~\ref{firstid}(a)}\\
=& E(\llbracket q_0,q_1,\dots,q_{t+1}\rrbracket).
\end{align*}

\noindent{\bf Case 2:} Suppose $q_1 = -1$ and $q_2=1$. Then
\begin{align*}
&(E\circ g)(\llbracket q_0,q_1,\dots,q_{t+1}\rrbracket)\\
= & (E\circ g)(\llbracket q_0-1\rrbracket \oplus-\llbracket q_2-1,q_3,\dots,q_{t+1}\rrbracket) \text{ by the definition of $g$}\\
=& (E\circ g)(\llbracket q_0-q_3-1,-q_4,\dots,-q_{t+1}\rrbracket) \text{ by the definition of $\oplus$} \\
=&[q_0-q_3-1,-q_4,\dots,-q_{t+1}] \text{ by the induction hypothesis}\\
=&  [q_0-1,0,-q_3,-q_4,\dots,-q_{t+1}]  \text{ by Lemma~\ref{firstid}(c)} \\
=& [q_0,-1,1,q_3,\dots,q_{t+1}] \text{ by Lemma~\ref{firstid}(b)}\\
=&  E(\llbracket q_0,q_1,\dots,q_{t+1}\rrbracket).
\end{align*}

\noindent{\bf Case 3:} Suppose $q_1 = -1$ and $q_2\neq 1$. Then
\begin{align*}
&(E\circ g)(\llbracket q_0,q_1,\dots,q_{t+1}\rrbracket)\\
= & (E\circ g)(\llbracket q_0-1\rrbracket \oplus-\llbracket q_2-1,q_3,\dots,q_{t+1}\rrbracket) \text{ by the definition of $g$}\\
=& (E\circ g)(\llbracket q_0-1,-(q_2-1),-q_3,\dots,-q_{t+1}\rrbracket) \text{ by the definition of $\oplus$} \\ 
=& [q_0-1,-(q_2-1),-q_3,\dots,-q_{t+1}] \text{ by the induction hypothesis} \\
=& [q_0,-1,q_2,q_3,\dots,q_{t+1}]  \text{ by Lemma~\ref{firstid}(b)}  \\
=&  E(\llbracket q_0,q_1,\dots,q_{t+1}\rrbracket).
\end{align*}

For the remaining case, $q_1>0$, if $r=1$, then the result is trivial and if $r>1$, then it is similar to case 1. Having exhausted all possibilities, the result follows by induction.
\end{proof}

We have shown thus far that $f_{u,v}$ and $g$ are well-behaved with respect to continued fraction representations, much like the original $f$ and $g$ were in~\cite{HMST}. The functions $f_{u,v}$ attempt to find equivalent continued fraction representations satisfying the $(u,v)$-divisibility property while $g$ computes the short continued fraction representation or a rational number. 

Before we can proceed to the main results, we need to show that the new definition of $g$ continues to hold similar properties to those of the function of the same name as it was originally defined in~\cite{HMST}.

\begin{lemma}\label{firstEntry}
For all $\llbracket q_0,q_1,\dots,q_r\rrbracket\in A_3$, if $$g(\llbracket q_0,q_1,\dots,q_r\rrbracket) = \llbracket q'_0,q'_1,\dots,q'_s\rrbracket,$$ then for any integer $n$, $$g(\llbracket q_0+n,q_1,\dots,q_r\rrbracket) = \llbracket q'_0+n,q'_1,\dots,q'_s\rrbracket.$$
\end{lemma}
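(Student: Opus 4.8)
The plan is to deduce the statement directly from Lemma~\ref{guvE}, which identifies $g_{u,v}$ with $C\circ E$ on $A_2$, together with two elementary observations about $E$ and $C$. First I would verify that $\llbracket q_0+n,q_1,q_2,\dots,q_r\rrbracket$ still lies in $A_2$, so that $g_{u,v}$ may legitimately be applied to it: membership in $A_2$ imposes conditions only on $q_1,\dots,q_r$ (namely $|q_i|>1$ for $0<i\le r$, together with $[q_i,\dots,q_r]\neq 0$ for $0<i<r$ inherited from $A_0$), none of which involves the zeroth entry, so replacing $q_0$ by $q_0+n$ preserves membership.

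Next I would record how the shift propagates through $E$ and $C$. From the definition of $E$ one reads off at once that
\[
E(\llbracket q_0+n,q_1,\dots,q_r\rrbracket)=n+E(\llbracket q_0,q_1,\dots,q_r\rrbracket).
\]
For $C$, I would invoke the standard fact that adding an integer $n$ to a rational number shifts only the leading partial quotient of its short continued fraction representation: if $[x_0,x_1,\dots]$ is the short continued fraction representation of $x$, then $[x_0+n,x_1,\dots]$ is one for $x+n$, and it is again short because the normalizing inequalities constrain only the partial quotients after $x_0$. Hence $C(x+n)$ is obtained from $C(x)$ by adding $n$ to its first entry and leaving the rest unchanged.

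Granting these, the lemma follows by a one-line computation. Set $x=E(\llbracket q_0,q_1,\dots,q_r\rrbracket)$, so that Lemma~\ref{guvE} gives $\llbracket q'_0,q'_1,\dots,q'_s\rrbracket=g_{u,v}(\llbracket q_0,q_1,\dots,q_r\rrbracket)=C(x)$. Applying Lemma~\ref{guvE} to the shifted tuple and then the two displayed facts,
\[
g_{u,v}(\llbracket q_0+n,q_1,\dots,q_r\rrbracket)=C\bigl(E(\llbracket q_0+n,q_1,\dots,q_r\rrbracket)\bigr)=C(x+n)=\llbracket q'_0+n,q'_1,\dots,q'_s\rrbracket,
\]
which is exactly the assertion.

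I do not anticipate a serious obstacle; the only points requiring a little care are confirming that the shifted tuple remains in $A_2$ and that the short-continued-fraction normalization is unaffected by adding an integer to the leading partial quotient. Should one wish to avoid Lemma~\ref{guvE} entirely, the same result can be obtained by strong induction on $r$ straight from the recursive definition of $g_{u,v}$: in each of its five cases the clause selected depends only on $q_1,\dots,q_r$, $u$, $v$, and $r$ and never on $q_0$, the recursive call never involves $q_0$, and a glance at the definition of $\oplus$ shows that the first entry of the output has the form $q_0+c$ with $c$ independent of $q_0$ while every later entry is independent of $q_0$; replacing $q_0$ by $q_0+n$ then shifts precisely the first entry by $n$.
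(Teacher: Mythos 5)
Your proposal is correct, and your primary argument takes a different (and slicker) route than the paper. The paper simply defers to the direct argument of Lemma 2.8 in \cite{HMST}, which works structurally from the recursive definition of $g_{u,v}$: one observes that the case selected, the recursive call, and all entries of the output past the first are determined by $q_1,\dots,q_r$, $u$, $v$, and $r$ alone, while the first entry has the form $q_0+c$ with $c$ independent of $q_0$ --- this is exactly the alternative you sketch in your final paragraph. Your main route instead leverages Lemma~\ref{guvE} to replace $g_{u,v}$ by $C\circ E$ on $A_2$, after which the claim reduces to the two elementary observations that shifting the leading entry shifts $E$ by $n$ and that adding an integer to a rational shifts only the leading partial quotient of its short continued fraction representation. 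This is logically sound (Lemma~\ref{guvE} is established beforehand and does not depend on Lemma~\ref{firstEntry}), and you correctly flag the two points that need checking: that $\llbracket q_0+n,q_1,\dots,q_r\rrbracket$ remains in $A_2$ and that $\llbracket q'_0+n,q'_1,\dots,q'_s\rrbracket$ remains in $A_1$, both of which hold because the defining conditions of $A_0$, $A_1$, and $A_2$ never constrain the zeroth entry. What the $C\circ E$ route buys is brevity and independence from the five-case structure of $g_{u,v}$; what the direct induction buys is self-containedness with respect to the recursion (useful if one wants the analogous fact for maps not known to equal $C\circ E$, such as $f_{u,v}$). Either argument would serve here.
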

The proof of Lemma~\ref{firstEntry} is trivial and therefore omitted.

\begin{lemma}\label{nonNegFirst}
For all $\llbracket q_0,q_1,\dots,q_r\rrbracket\in A_3$, if $$g(\llbracket q_0,q_1,\dots,q_r\rrbracket)=\llbracket q'_0,q'_1,\dots,q'_s\rrbracket$$ is such that $\llbracket q_0,q_1,\dots,q_r\rrbracket$ satisfies the $(u,v)$-divisibility property with $u+v>4$ and $q_0>0$, then $q'_0\geq 0$. In particular, $q'_0$ is equal to either $q_0+1$, $q_0$, $q_0-1$ or $q_0-2$, with the latter only possible when $u=1$ and $v\geq 4$.
\end{lemma}

\begin{proof}
We can prove the lemma by examining three cases. Note that by the definition of $g$, we can assume that $r>1$.

\noindent{\bf Case 1:} Suppose $q_1>-1$. Then $q'_0=q_0\geq 0$.

\noindent{\bf Case 2:} Suppose $q_1<-1$. Then $q'_0=q_0-1\geq 0$.

\noindent{\bf Case 3:} Suppose $q_1=-1$. Then $ q_0\geq 4$, so
\begin{align*}
    g(\llbracket q_0,q_1,q_2,\dots,q_r\rrbracket) &= g(\llbracket q_0-1\rrbracket\oplus-\llbracket q_2-1,\dots,q_r\rrbracket).
\end{align*}
Since it cannot be the case that $q_2=2$, it follows that this case resolves to a previous case with $q_0-1$ in place of $q_0$. From this we see that the desired result holds.

Having exhausted all possible cases, the statement of the lemma holds.
\end{proof}

\begin{proposition}\label{gToA1}
If $\llbracket q_0,q_1,\dots,q_r\rrbracket\in A_3$ satisfies the $(u,v)$-divisibility property with $u+v>4$, then 
$g(\llbracket q_0,q_1,\dots,q_r\rrbracket)\in A_1$.
\end{proposition}

\begin{proof}
We will prove the desired result by strong induction on $r$. Note that by Lemma~\ref{firstEntry}, we may assume that $q_0$ is a positive multiple of $v$.

Suppose that $r=0$. Then the result follows immediately since $g(\llbracket q_0\rrbracket)=q_0$.

Suppose that $r=1$. If $q_1=\pm 1$, then $g(\llbracket q_0,\pm1\rrbracket) = \llbracket q_0\pm 1\rrbracket\in A_1$. If $q_1=-2$, then $g(\llbracket q_0,-2\rrbracket) = \llbracket q_0-1,2\rrbracket\in A_1$. If $q_1<-2$, then $g(\llbracket q_0,q_1\rrbracket) = \llbracket q_0-1,1,-(q_1+1)\rrbracket\in A_1$. If $q_1>1$, then $g(\llbracket q_0,q_1\rrbracket) = \llbracket q_0,q_1\rrbracket\in A_1$. 

Suppose that the result holds for $0\leq r\leq t$ for some $t\geq 1$ and all pairs $u$ and $v$ with $u+v>4$.

\noindent{\bf Case 1:} Suppose $q_1> -1$. Then 
\begin{align*}
     g(\llbracket q_0,q_1,q_2,\dots,q_{t+1}\rrbracket) &=  \llbracket q_0\rrbracket \oplus g(\llbracket q_1,\dots,q_{t+1}\rrbracket)\\
     &= \llbracket q_0\rrbracket \oplus \llbracket p_0,\dots,p_s\rrbracket.
\end{align*}
In this case we have that $\llbracket q_1,\dots,q_{t+1}\rrbracket$ satisfies the $(v,u)$-divisibility property, so $\llbracket p_0,\dots,p_s\rrbracket\in A_1$ by the induction hypothesis and $p_0\geq 0$ by Lemma~\ref{nonNegFirst}. In particular, $p_i>0$ for $0<i\leq s$. If $p_0>0$, then
\begin{equation*}
    \llbracket q_0\rrbracket\oplus \llbracket p_0,\dots,p_s\rrbracket = \llbracket q_0,p_0,\dots,p_s\rrbracket.
\end{equation*}
Whereas if $p_0=0$, then
\begin{equation*}
    \llbracket q_0\rrbracket\oplus \llbracket p_0,\dots,p_s\rrbracket = \llbracket q_0+p_1,\dots,p_s\rrbracket.
\end{equation*}
In either situation we get the desired result.

\noindent{\bf Case 2:} Suppose $q_1<-1$. Then
\begin{align*}
    g(\llbracket q_0,q_1,q_2,\dots,q_{t+1}\rrbracket) &= \llbracket q_0-1,1\rrbracket\oplus g(-\llbracket q_1-1,q_2,\dots,q_{t+1}\rrbracket)\\
    &= \llbracket q_0-1,1\rrbracket\oplus g(-\llbracket q_1-1,q_2,\dots,q_{t+1}\rrbracket)\\
    &= \llbracket q_0-1,1\rrbracket\oplus \llbracket p_0+1,p_1,\dots,p_s\rrbracket
\end{align*}
 by Lemma~\ref{firstEntry}, where
\begin{equation*}
    g(-\llbracket q_1,q_2,\dots,q_{t+1}\rrbracket) = \llbracket p_0,p_1,\dots,p_s\rrbracket.
\end{equation*}
Since $-\llbracket q_1,q_2,\dots,q_{t+1}\rrbracket$ satisfies the $(v,u)$-divisibility property and $-q_1>1$, then $\llbracket p_0,p_1,\dots,p_s\rrbracket\in A_1$ by the induction hypothesis, so $\llbracket p_0+1,p_1\dots,p_s\rrbracket\in A_1$. The remaining portion of this case follows similarly to case 1.

\noindent{\bf Case 3:} Suppose $q_1=-1$. Then
\begin{align*}
    g(\llbracket q_0,q_1,q_2,\dots,q_{t+1}\rrbracket) &= g(\llbracket q_0-1\rrbracket\oplus-\llbracket q_2-1,\dots,q_{t+1}\rrbracket)\\
    &= g(\llbracket q_0-1,1-q_2,-q_3,\dots,-q_{t+1}\rrbracket).
\end{align*}
If $q_2<0$, then 
\begin{equation*}
    g(\llbracket q_0,q_1,q_2,\dots,q_{t+1}\rrbracket) = \llbracket q_0-1\rrbracket \oplus g(\llbracket 1-q_2,-q_3,\dots,-q_{t+1}\rrbracket)
\end{equation*}
and we can use an argument similar to case 2. Alternatively, if $q_2>0$, then
\begin{equation*}
    g(\llbracket q_0,q_1,q_2,\dots,q_{t+1}\rrbracket) = \llbracket q_0-2,1\rrbracket \oplus g(\llbracket q_2-2,q_3,\dots,q_{t+1}\rrbracket)
\end{equation*}
and we can, again, use an argument similar to case 2 since it must be the case that $q_2\geq 4$.

Having exhausted all possibilities, the result follows by strong induction.
\end{proof}

\begin{corollary}\label{guvE}
If $\llbracket q_0,q_1,\dots,q_r\rrbracket\in A_3$ satisfies the $(u,v)$-divisibility property with $u+v>4$, then $$g(\llbracket q_0,q_1,\dots,q_r\rrbracket)=(C\circ E)(\llbracket q_0,q_1,\dots,q_r\rrbracket).$$
\end{corollary}

\begin{proof}
    Let $\llbracket q'_0,q'_1,\dots,q'_s\rrbracket=g(\llbracket q_0,q_1,\dots,q_r\rrbracket)$. By Proposition~\ref{gToA1}, $\llbracket q'_0,q'_1,\dots,q'_s\rrbracket\in A_1$. Furthermore, $q'_s>1$, so $[q'_0,q'_1,\dots,q'_s]$ is the short continued fraction representation of a rational number. By Corollary~\ref{eguv} and the uniqueness of short continued fraction representations, it must be the case that 
\begin{align*}
    (C\circ E)(\llbracket q_0,q_1,\dots,q_r\rrbracket) &= (C\circ E\circ g)(\llbracket q_0,q_1,\dots,q_r\rrbracket)\\
    &= (C\circ E)(\llbracket q'_0,q'_1,\dots,q'_s\rrbracket)\\
    &= C([q'_0,q'_1,\dots,q'_s])\\
    &=\llbracket q'_0,q'_1,\dots,q'_s\rrbracket\\
    &=g(\llbracket q_0,q_1,\dots,q_r\rrbracket).
\end{align*}
\end{proof}

We are now in a position to prove the analogue of Proposition 2.9 in~\cite{HMST}, albeit under a more restricted hypothesis.

\begin{proposition}\label{fuvg}
If $\llbracket q_0,q_1,\dots,q_r\rrbracket\in A_3$ satisfies the $(u,v)$-divisibility property with $u+v>4$, then 
\begin{align*}
(f_{u,v}\circ g)(\llbracket q_0,q_1,\dots,q_r\rrbracket) = \llbracket q_0,q_1,\dots,q_r\rrbracket.\label{uvuv}
\end{align*} 
\end{proposition}

\begin{proof}
We will prove the desired result by strong induction on $r$.

Suppose that $r=0$. Then the result follows immediately for all possible pairs $u$ and $v$ since $(f_{u,v}\circ g)(\llbracket q_0\rrbracket) = f_{u,v}(\llbracket q_0\rrbracket)=\llbracket q_0\rrbracket$. 

Suppose that $r=1$. If $q_1=\pm 1$, then 
\begin{align*}
    (f_{u,v}\circ g)(\llbracket q_0,\pm 1\rrbracket) &= f_{u,v}(\llbracket q_0\pm 1\rrbracket) \\
    &= \llbracket q_0,\pm 1\rrbracket. 
\end{align*}
If $q_1=-2$, then it must be the case that $u=1$ or $2$. If $u=1$, then
\begin{align*}
    (f_{1,v}\circ g)(\llbracket q_0,-2\rrbracket) &= f_{1,v}(\llbracket q_0-1,2\rrbracket) \\
    &= \llbracket q_0,-1\rrbracket \oplus -f_{1,v}(\llbracket 1\rrbracket) \\
    &= \llbracket q_0,-1\rrbracket \oplus -\llbracket 0,1\rrbracket \\
    &= \llbracket q_0,-2\rrbracket.
\end{align*}
Otherwise,
\begin{align*}
    (f_{2,v}\circ g)(\llbracket q_0,-2\rrbracket)&= f_{2,v}(\llbracket q_0-1,2\rrbracket) \\
    &= \llbracket q_0\rrbracket \oplus - f_{v,2}(\llbracket 1,1\rrbracket) \\
    &= \llbracket q_0\rrbracket \oplus -(\llbracket 2\rrbracket \oplus - f_{2,v}(\lambda)) \\
    &= \llbracket q_0\rrbracket \oplus -(\llbracket 2\rrbracket \oplus  \lambda)\\
    &= \llbracket q_0,-2\rrbracket.
\end{align*}
If $q_1<-2$, then 
\begin{align*}
    (f_{u,v}\circ g)(\llbracket q_0,q_1\rrbracket) &= f_{u,v}(\llbracket q_0-1,1\rrbracket \oplus g(-\llbracket q_1+1\rrbracket)) \\
    &= f_{u,v}(\llbracket q_0-1,1,-(q_1+1)\rrbracket) \\
    &= \llbracket q_0\rrbracket \oplus -f_{v,u}(\llbracket -q_1\rrbracket) \\
    &= \llbracket q_0,q_1\rrbracket 
\end{align*}
If $q_1>1$, then
\begin{align*}
    (f_{u,v}\circ g)(\llbracket q_0,q_1\rrbracket) &= f_{u,v}(\llbracket q_0\rrbracket \oplus g(\llbracket q_1\rrbracket)) \\
    &= f_{u,v}(\llbracket q_0,q_1\rrbracket)\\
    &= \llbracket q_0\rrbracket \oplus f_{v,u}(\llbracket q_1\rrbracket) \\
    &= \llbracket q_0,q_1\rrbracket. 
\end{align*}
Having exhausted all possibilities, we have that the result holds for $r=1$.

Suppose that the result holds for $0\leq r\leq t$ for some $t\geq 1$ and all pairs $u$ and $v$ with $u+v>4$.

\noindent{\bf Case 1:} Suppose $q_1>0$. Then 
\begin{align*}
    (f_{u,v}\circ g)(\llbracket q_0,q_1,q_2,\dots,q_{t+1}\rrbracket) &= f_{u,v}(\llbracket q_0\rrbracket\oplus g(\llbracket q_1,q_2,\dots,q_{t+1}\rrbracket))\\
    &= \llbracket q_0\rrbracket\oplus (f_{v,u}\circ g)(\llbracket q_1,q_2,\dots,q_{t+1}\rrbracket)\\
    &= \llbracket q_0,q_1,q_2,\dots,q_{t+1}\rrbracket {\text{ by the induction hypothesis,}}
\end{align*}
as desired.

\noindent{\bf Case 2:} Suppose $q_1<-1$. Let
\begin{equation}
\label{q1.less.neg1}
    \llbracket q'_0,\dots,q'_s\rrbracket = g(-\llbracket q_1+1,q_2,\dots,q_{t+1}\rrbracket).
\end{equation}
Then 
\begin{align*}
    (f_{u,v}\circ g)(\llbracket q_0,q_1,q_2,\dots,q_{t+1}\rrbracket) &= f_{u,v}(\llbracket q_0-1,1\rrbracket \oplus g(-\llbracket q_1+1,q_2,\dots,q_{t+1}\rrbracket)) \\
    &= f_{u,v}(\llbracket q_0-1,1\rrbracket \oplus \llbracket q'_0,\dots,q'_s\rrbracket),
\end{align*}
where $q'_0\geq 0$ by Lemma~\ref{nonNegFirst}. If $q'_0>0$, then, using Lemma~\ref{firstEntry} and the induction hypothesis,
\begin{align*}
    (f_{u,v}\circ g)(\llbracket q_0,q_1,q_2,\dots,q_{t+1}\rrbracket) &= f_{u,v}(\llbracket q_0-1,1,q'_0,\dots,q'_s\rrbracket)\\
    &= \llbracket q_0\rrbracket \oplus -f_{v,u}(\llbracket q'_0+1,q'_1,\dots,q'_s\rrbracket) \\
    &= \llbracket q_0\rrbracket \oplus -(f_{v,u}\circ g)(-\llbracket q_1,q_2,\dots,q_{t+1}\rrbracket)  {\text{ by Lemma~\ref{firstEntry}}}\\
    &= \llbracket q_0\rrbracket \oplus \llbracket q_1,q_2,\dots,q_{t+1}\rrbracket\\
    &= \llbracket q_0,q_1,q_2,\dots,q_{t+1}\rrbracket.
\end{align*}
Otherwise, if $q'_0=0$, then using~\eqref{q1.less.neg1} and Lemma~\ref{firstEntry},
\begin{equation}
\label{eq:qprime0.is.0}
g(\llbracket -q_1,-q_2,\dots,-q_{t+1}\rrbracket) = \llbracket 1, q'_1,q'_2,\dots,q'_s\rrbracket.
\end{equation}
In view of Lemma~\ref{nonNegFirst}, for~\eqref{eq:qprime0.is.0} to hold, it must be the case that $u\leq 2$, $q_1=-2$, and $q_2\geq3$. In particular,
\begin{align}
\label{eq:q1.neg.2}
    (f_{u,v}\circ g)(\llbracket q_0,q_1,q_2,\dots,q_{t+1}\rrbracket) &= (f_{u,v}\circ g)(\llbracket q_0,-2,q_2,\dots,q_{t+1}\rrbracket)\\
    &= f_{u,v}(\llbracket q_0-1,1\rrbracket\oplus g(-\llbracket-1,q_2,\dots,q_{t+1}\rrbracket))\notag\\
    &= f_{u,v}(\llbracket q_0-1,1\rrbracket\oplus (\llbracket 0,1\rrbracket\oplus g(\llbracket q_2-1,q_3,\dots,q_{t+1}\rrbracket)))\notag\\
    &= f_{u,v}(\llbracket q_0-1,2\rrbracket\oplus g(\llbracket q_2-1,q_3,\dots,q_{t+1}\rrbracket)).\notag
\end{align}
Let
\begin{equation}
\label{q2.pos}
    \llbracket q^\ast_0,q^\ast_1,\dots,q^\ast_w\rrbracket = g(\llbracket q_2,q_3,\dots,q_{t+1}\rrbracket).
\end{equation}
If $u=1$, then using~\eqref{eq:q1.neg.2},~\eqref{q2.pos}, and Lemma~\ref{firstEntry},
\begin{align}
\label{eq:fuv.q2}
    (f_{u,v}\circ g)(\llbracket q_0,q_1,q_2,\dots,q_{t+1}\rrbracket) &= \llbracket q_0,-1\rrbracket\oplus-f_{u,v}(\llbracket 1\rrbracket\oplus g(\llbracket q_2-1,q_3,\dots,q_{t+1}\rrbracket)) \\
    &= \llbracket q_0,-1\rrbracket\oplus-f_{u,v}(\llbracket 1\rrbracket\oplus \llbracket q^\ast_0-1,q^\ast_1,\dots,q^\ast_w\rrbracket) {\text{ by Lemma~\ref{firstEntry}}}.
\end{align}
Note that since $u=1$, we must have $q_2\geq 4$, so by Lemma~\ref{nonNegFirst}, $q^\ast_0\geq 2$. It follows using~\eqref{q2.pos} and~\eqref{eq:fuv.q2} that
\begin{align*}
    (f_{u,v}\circ g)(\llbracket q_0,q_1,q_2,\dots,q_{t+1}\rrbracket) &= \llbracket q_0,-1\rrbracket\oplus -f_{u,v}(\llbracket 1, q^\ast_0-1,q^\ast_1,\dots,q^\ast_w\rrbracket)\\
    &= \llbracket q_0,-1\rrbracket\oplus -(\llbracket 0,1\rrbracket\oplus -f_{u,v}(\llbracket q^\ast_0,q^\ast_1,\dots,q^\ast_w\rrbracket)) \\
    &= \llbracket q_0,-2\rrbracket\oplus (f_{u,v}\circ g)(\llbracket q_2,q_3,\dots,q_{t+1}\rrbracket)\\
    &= \llbracket q_0,-2,q_2,q_3,\dots,q_{t+1}\rrbracket {\text{ by the induction hypothesis.}}
\end{align*}
Alternatively, suppose that $u=2$. It again follows from Lemma~\ref{nonNegFirst} that $q^\ast_0\geq 2$. Using~\eqref{eq:q1.neg.2},~\eqref{q2.pos}, and Lemma~\ref{firstEntry},
\begin{align*}
    (f_{u,v}\circ g)(\llbracket q_0,q_1,q_2,\dots,q_{t+1}\rrbracket) &= \llbracket q_0\rrbracket\oplus - f_{v,u}(\llbracket 1,1\rrbracket\oplus g(\llbracket q_2-1,q_3,\dots,q_{t+1}\rrbracket)) \\
    &= \llbracket q_0\rrbracket\oplus - f_{v,u}(\llbracket 1,1\rrbracket\oplus \llbracket q^\ast_0-1,q^\ast_1,\dots,q^\ast_w\rrbracket)\\
    &= \llbracket q_0\rrbracket\oplus-(\llbracket 2\rrbracket\oplus-f_{u,v}(\llbracket q^\ast_0,q^\ast_1,\dots,q^\ast_w\rrbracket)) \\
    &= \llbracket q_0\rrbracket\oplus-(\llbracket 2\rrbracket\oplus-(f_{u,v}\circ g)(\llbracket q_2,q_3,\dots,q_{t+1}\rrbracket))\\
    &= \llbracket q_0,-2,q_2,q_3,\dots,q_{t+1}\rrbracket  {\text{ by the induction hypothesis.}}
\end{align*}
So the result holds in this case.

\noindent{\bf Case 3:} Suppose $q_1=-1$. Let
\begin{equation}
\label{q2.neg}
    \llbracket q^\dag_0,q^\dag_1,\dots,q^\dag_y\rrbracket = g(\llbracket -q_2,-q_3,\dots,-q_{t+1}\rrbracket).
\end{equation}
By assumption, we must have $u=1$ and $v\geq 4$. If $q_2\leq -4$, then, using~\eqref{q2.neg} and Lemma~\ref{firstEntry},
\begin{align*}
    (f_{u,v}\circ g)(\llbracket q_0,-1,q_2,\dots,q_{t+1}\rrbracket) &= (f_{u,v}\circ g)(\llbracket q_0-1,1-q_2,-q_3,\dots,-q_{t+1}\rrbracket) \\
    &= f_{u,v}(\llbracket q_0-1\rrbracket\oplus g(\llbracket 1-q_2,-q_3,\dots,-q_{t+1}\rrbracket)) \\
    &= f_{u,v}(\llbracket q_0-1\rrbracket\oplus \llbracket q^\dag_0+1,q^\dag_1,\dots,q^\dag_y\rrbracket) {\text{ by Lemma~\ref{firstEntry}}}\\
    &= f_{u,v}(\llbracket q_0-1,q^\dag_0+1,q^\dag_1,\dots,q^\dag_y\rrbracket).
\end{align*}
It follows that $q^\dag_0\geq 2$ by Lemma~\ref{nonNegFirst}. Now using~\eqref{q2.neg}, 
\begin{align*}
    (f_{u,v}\circ g)(\llbracket q_0,-1,q_2,\dots,q_{t+1}\rrbracket) &= \llbracket q_0,-1\rrbracket\oplus -f_{u,v}(\llbracket q^\dag_0,q^\dag_1,\dots,q^\dag_y\rrbracket)\\
    &= \llbracket q_0,-1\rrbracket\oplus -(f_{u,v}\circ g)(\llbracket -q_2,-q_3,\dots,-q_{t+1}\rrbracket)\\
    &= \llbracket q_0,-1,q_2,\dots,q_{t+1}\rrbracket {\text{ by the induction hypothesis.}}
\end{align*}
If, instead, $q_2\geq 4$, then, using~\eqref{q2.pos} and Lemma~\ref{firstEntry},
\begin{align}
\label{q_double_prime_start}
    (f_{u,v}\circ g)(\llbracket q_0,-1,q_2,\dots,q_{t+1}\rrbracket) &= (f_{u,v}\circ g)(\llbracket q_0-1,1-q_2,-q_3,\dots,-q_{t+1}\rrbracket) \\
    &= f_{u,v}(\llbracket q_0-2,1\rrbracket\oplus g(-\llbracket 2-q_2,-q_3,\dots,-q_{t+1}\rrbracket)) \\
    &= f_{u,v}(\llbracket q_0-2,1\rrbracket\oplus\llbracket q^\ast_0-2,q^\ast_1,\dots,q^\ast_w\rrbracket)\notag.
\end{align}
Note that $q^\ast_0\geq 2$ by Lemma~\ref{nonNegFirst}. If $q^\ast_0\geq 3$, then using~\eqref{q2.pos} and~\eqref{q_double_prime_start},
\begin{align*}
    (f_{u,v}\circ g)(\llbracket q_0,-1,q_2,\dots,q_{t+1}\rrbracket) &= f_{u,v}(\llbracket q_0-2,1,q^\ast_0-2,q^\ast_1,\dots,q^\ast_w\rrbracket)\\
    &= \llbracket q_0,-1\rrbracket\oplus f_{u,v}(\llbracket q^\ast_0,q^\ast_1,\dots,q^\ast_w\rrbracket) \\
    &= \llbracket q_0,-1\rrbracket\oplus (f_{u,v}\circ g)(\llbracket q_2,q_3,\dots,q_{t+1}\rrbracket)\\
    &= \llbracket q_0,-1,q_2,\dots,q_{t+1}\rrbracket {\text{ by the induction hypothesis.}}
\end{align*}
Alternatively, if $q^\ast_0=2$, then by Lemma~\ref{nonNegFirst}, we must have that $t\geq 3$ with $v=4$, $q_2=4$, $q_3=-1$, and $q_4\geq 4$. Let
\begin{equation}
\label{q4}
    \llbracket q^\diamond_0, q^\diamond_1,\dots,q^\diamond_z\rrbracket = g(\llbracket q_4,q_5,\dots,q_{t+1}\rrbracket).
\end{equation}
Now, using~\eqref{q4} and Lemma~\ref{firstEntry},
\begin{align*}
    (f_{u,v}\circ g)(\llbracket q_0,-1,4,-1,q_4,\dots,q_{t+1}\rrbracket) &= (f_{u,v}\circ g)(\llbracket q_0-1\rrbracket\oplus -\llbracket 3,-1,q_4,\dots,q_{t+1}\rrbracket)\\
    &= (f_{u,v}\circ g)(\llbracket q_0-1, -3,1,-q_4,\dots,-q_{t+1}\rrbracket)\\
    &= f_{u,v}(\llbracket q_0-2,1\rrbracket\oplus g(-\llbracket -2,1,-q_4,\dots,-q_{t+1}\rrbracket))\\
    &= f_{u,v}(\llbracket q_0-2,1\rrbracket\oplus g(\llbracket 2,-1,q_4,\dots,q_{t+1}\rrbracket))\\
    &= f_{u,v}(\llbracket q_0-2,1\rrbracket\oplus g(\llbracket 1\rrbracket\oplus -\llbracket q_4-1,q_5,\dots,q_{t+1}\rrbracket))\\
    &= f_{u,v}(\llbracket q_0-2,1\rrbracket\oplus g(\llbracket 1,1-q_4,-q_5,\dots,-q_{t+1}\rrbracket))\\
    &= f_{u,v}(\llbracket q_0-2,1\rrbracket\oplus(\llbracket0,1\rrbracket\oplus g(-\llbracket 2-q_4,-q_5,\dots,-q_{t+1}\rrbracket)))\\
    &= f_{u,v}(\llbracket q_0-2,2\rrbracket\oplus g(\llbracket q_4-2,q_5,\dots,q_{t+1}\rrbracket))\\
    &= f_{u,v}(\llbracket q_0-2,2,q^\diamond_0-2,q^\diamond_1,\dots,q^\diamond_z\rrbracket)\\
    &= \llbracket q_0,-1\rrbracket\oplus f_{u,v}(\llbracket 2,1,q^\diamond_0-2,q^\diamond_1,\dots,q^\diamond_z\rrbracket)\\
    &= \llbracket q_0,-1\rrbracket\oplus (\llbracket 4,-1\rrbracket\oplus f_{u,v}(\llbracket q^\diamond_0,q^\diamond_1,\dots,q^\diamond_z\rrbracket))\\
    &= \llbracket q_0,-1,4,-1\rrbracket\oplus (f_{u,v}\circ g)(\llbracket q_4,q_5,\dots,q_{t+1}\rrbracket))\\
    &= \llbracket q_0,-1,4,-1,q_4,q_5,\dots,q_{t+1}\rrbracket \text{ by the induction hypothesis.}
\end{align*}
So the result also holds in this case.

Having exhausted all possibilities, the result follows by strong induction. 
\end{proof}

Before presenting our main results, we restate a result from~\cite{HMST} that will be used several times throughout the subsequent proofs.

\begin{lemma}[\cite{HMST}, Lemma 2.10]\label{cf}
 Let $a/b$ be a rational number with $a/b=[q_0,q_1,\ldots,q_r]$, $\alpha\in\mathbb{Z}$, and $u$ and $v$ be nonnegative integers. It follows that
 \begin{enumerate}
    \item[(a)] $L_u^\alpha\begin{bmatrix}a\\b\end{bmatrix}=\begin{bmatrix}a\\au\alpha+b\end{bmatrix}$ and $R_v^\alpha\begin{bmatrix}a\\b\end{bmatrix}=\begin{bmatrix}a+bv\alpha\\b\end{bmatrix}$;
    \item[(b)] if $q_0=0$ and $u\alpha + q_1\neq 0$, then $\cfrac{a}{au\alpha+b}=[0,
u\alpha+ q_1,q_2,\dots,q_r]$;
    \item[(c)] if $q_0=0$ and $u\alpha + q_1=0$, then $\cfrac{a}{au\alpha+b}=[q_2,\dots,q_r]$;
    \item[(d)] if $q_0\neq0$, then $\cfrac{a}{au\alpha+b}=[0,
u\alpha,q_0,q_1,\dots,q_r]$; 
    \item[(e)]  $\cfrac{a+bv\alpha}{b}=[v\alpha+q_0,q_1,\dots,q_r]$.
 \end{enumerate}
\end{lemma}

The following proposition represents an extension of Proposition 3.2 in~\cite{HMST}. The proof that follows is essentially the proof of Proposition 3.1 in~\cite{HMST} modified accordingly and presented for completeness.

\begin{proposition}\label{oneFracGroup}
For integers $u,v\geq 1$, with $u+v>4$, and a matrix $M=\begin{bmatrix}
    a & b \\
    c & d
    \end{bmatrix}\in \mathscr{G}_{u,v}$, $(f_{v,u}\circ C)(c/a)$ satisfies the $(v,u)$-divisibility property if and only if $(f_{u,v}\circ C)(b/d)$ satisfies the $(u,v)$-divisibility property.
\end{proposition}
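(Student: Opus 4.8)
The plan is to exploit the structure of a matrix $M=\begin{bmatrix} a & b \\ c & d\end{bmatrix}\in\mathscr{G}_{u,v}$ to relate $c/a$ and $b/d$ directly. Since $M\in\mathscr{G}_{u,v}$, we have $c=un_3$, $b=vn_2$, $a=1+uvn_1$, $d=1+uvn_4$, and $\det M = ad-bc=1$. Reducing the determinant equation modulo appropriate integers, or simply manipulating it, should show that $c/a$ and $b/d$ are tightly linked: in fact $M^{-1}=\begin{bmatrix} d & -b \\ -c & a\end{bmatrix}$ is also in $\mathscr{G}_{u,v}$, and the transpose $M^T=\begin{bmatrix} a & c \\ b & d\end{bmatrix}$ lies in $\mathscr{G}_{v,u}$. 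Applying the main characterization (or rather, setting up the equivalence we want before the main theorem is proved) suggests the cleanest route is a purely continued-fraction identity: I would prove that $C(c/a)$ and $C(b/d)$ are related by reversal and/or negation of partial quotients, and that the $(u,v)$- and $(v,u)$-divisibility properties, together with $f_{u,v}$, behave symmetrically under that operation.

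Concretely, the first step is to establish that for $M\in\mathscr{G}_{u,v}$ the short continued fraction representation of $c/a$ is obtained from that of $b/d$ by reversing the order of the partial quotients (possibly up to adjusting the first or last entry and a global sign), using the standard fact that if $b/d=[q_0,q_1,\dots,q_r]$ in lowest terms with convergent numerators/denominators matching $(b,d)$ up to sign, then the ``mirror'' continued fraction $[q_r,q_{r-1},\dots,q_0]$ has numerator/denominator $(b, b')$ where $b'$ is the complementary cofactor — and the determinant condition $ad-bc=\pm1$ forces $a\equiv\pm b' \pmod{d}$ type relations. I would then observe that reversal swaps the parity of indices (even $\leftrightarrow$ odd when $r$ is even, fixed when $r$ is odd), which is exactly why the roles of $u$ and $v$ get exchanged between the two divisibility properties. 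The key structural lemma to prove is that $f_{u,v}$ commutes appropriately with this reversal-and-negation operation, i.e. reversing the input of $f_{u,v}$ and reversing the output of $f_{v,u}$ agree up to the same bookkeeping, so that $(f_{u,v}\circ C)(c/a)$ satisfies $(v,u)$-divisibility exactly when $(f_{u,v}\circ C)(b/d)$ satisfies $(u,v)$-divisibility.

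The main obstacle I anticipate is that $f_{u,v}$ is defined recursively from the left (it processes $q_0,q_1$ first), whereas the natural relationship between $c/a$ and $b/d$ acts on the right end / reverses order, so "commuting with reversal" is not formal and requires care — especially because $f_{u,v}$ can change the length of the tuple (collapsing $1$'s and $2$'s), can introduce $-2$'s, and alternates between $f_{u,v}$ and $f_{v,u}$ as it recurses. One clean way around this is to avoid reversal entirely: instead use the evaluation machinery. Since $E\circ f_{u,v} = E$ on $A_1$ (Corollary~\ref{efuv}) and $f_{u,v}$ lands in $A_2$, the tuple $(f_{u,v}\circ C)(b/d)$ is determined by $b/d$; and by Lemma~\ref{guvE} one can reconstruct short continued fractions from $A_2$-tuples. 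So I would instead characterize the $(u,v)$-divisibility of $(f_{u,v}\circ C)(b/d)$ intrinsically in terms of $M$ (for instance, as the condition that $b/d$ has \emph{some} continued fraction expansion with all partial quotients divisible in the alternating pattern — the condition appearing in Theorem~\ref{egGp} generalized), and then use the symmetry $M\mapsto M^T$, $\mathscr{G}_{u,v}\mapsto\mathscr{G}_{v,u}$, $c/a\leftrightarrow b/d$ to conclude.

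The steps, in order: (1) record that $M^T\in\mathscr{G}_{v,u}$ and that transposition swaps $b/d$ with $c/a$; (2) prove the reconstruction/characterization statement: $(f_{u,v}\circ C)(x)$ satisfies the $(u,v)$-divisibility property if and only if $x$ admits a (not necessarily short) continued fraction expansion $[x_0,\dots,x_k]$ with $v\mid x_i$ for $i$ even and $u\mid x_i$ for $i$ odd — this uses Corollary~\ref{efuv}, Lemma~\ref{guvE}, Proposition~\ref{fuvguv}, and the identities in Lemmas~\ref{firstid} and~\ref{secondid} to pass between short and non-short expansions; (3) show this latter condition is symmetric in the required sense by the standard continued-fraction reversal fact applied to the integer relations among $a,b,c,d$ coming from $\det M=1$ and $M\in\mathscr{G}_{u,v}$ (namely $av\equiv$ suitable cofactor mod $d$, etc.); (4) combine (1)–(3). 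I expect step (2) to carry most of the technical weight and step (3) to be the place where a subtle sign or endpoint adjustment (handled by Lemma~\ref{firstid}) must be tracked, particularly when $r$ is even so that the index parities genuinely flip.
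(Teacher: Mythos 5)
Your plan splits into two parts, and they fare differently. The engine of the argument --- your step (2), characterizing ``$(f_{u,v}\circ C)(x)$ satisfies the $(u,v)$-divisibility property'' as ``$x$ admits an expansion with the alternating divisibility pattern'' via Corollary~\ref{efuv}, Lemma~\ref{guvE} and Proposition~\ref{fuvguv}, followed by a transfer between the two columns of $M$ using the determinant and congruence structure --- is sound and is essentially what the paper does: its proof simply invokes the argument of Proposition 3.2 in~\cite{HMST} with Lemma 2.7 and Proposition 2.9 there replaced by Lemma~\ref{guvE} and Proposition~\ref{fuvguv}. That transfer is cleanest not as a continued-fraction reversal identity between the rationals $b/d$ and $c/a$, but as you suspected via the matrix: from a good expansion of $b/d$ one builds $N=R_v^{\alpha_0}L_u^{\alpha_1}\cdots R_v^{\alpha_{r-1}}$ with second column $\pm(b,d)^T$, deduces $M=NL_u^{m}$ from $M,N\in\mathscr{G}_{u,v}$ and $\det M=1$, and reads the good expansion of $c/a$ off the first column of the extended word.

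Two concrete problems remain. First, your step (1) is false: for $M=\begin{bmatrix}a&b\\c&d\end{bmatrix}$ one has $M^T=\begin{bmatrix}a&c\\b&d\end{bmatrix}$, whose relevant ratios are $c/d$ and $b/a$, not $c/a$ and $b/d$. The involution you want is conjugation by $J=\begin{bmatrix}0&1\\1&0\end{bmatrix}$, which sends $M$ to $\begin{bmatrix}d&c\\b&a\end{bmatrix}$, genuinely swaps the two ratios, and carries $L_u,R_v,\mathscr{G}_{u,v}$ to $R_u,L_v,\mathscr{G}_{v,u}$. Second, even after that repair the ``conclude by symmetry'' step does not close: the symmetry produces a statement about $(f_{v,u}\circ C)(c/a)$, whereas the proposition pins $f_{u,v}$ on $c/a$, and these two outputs really differ because the two functions branch on different divisibility tests. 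This is not bookkeeping: for the paper's own example $M=R_3^{3}L_2R_3^{-1}L_2^{5}R_3L_2^{-1}R_3^{2}$ one gets $c/a=250/2401$ with $C(c/a)=\llbracket 0,9,1,1,1,1,9,2,2\rrbracket$, and a direct computation gives $(f_{3,2}\circ C)(c/a)=\llbracket 0,9,2,-3,10,3,-2\rrbracket$, which satisfies the $(3,2)$-divisibility property, while $(f_{2,3}\circ C)(c/a)=\llbracket 0,10,-3,2,9,2,2\rrbracket$, which does not (since $3\nmid 10$). So your plan as written would prove the version of the statement with $f_{v,u}$ applied to $c/a$; to get the literal statement you would have to reconcile $f_{u,v}$ with $f_{v,u}$ on these fractions, and the example above shows that reconciliation cannot be automatic.
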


\begin{proof}
$(\Rightarrow)$ The proof in this direction is similar to that of the reverse direction. Since the main result is stated in terms of $b/d$, we omit this proof.

$(\Leftarrow)$ Suppose that $(f_{u,v}\circ C)(b/d)$ has the $(u,v)$-divisibility property. We first show that it must be the case that $(f_{u,v}\circ C)(b/d)=\llbracket v\alpha_0,u\alpha_1,\dots,v\alpha_{r-1}\rrbracket$ for some odd $r$.

Suppose that $(f_{u,v}\circ C)(b/d)=\llbracket v\alpha_0,u\alpha_1,\dots,u\alpha_r\rrbracket$ and consider the matrix $$N=R_v^{-\alpha_{r-1}}\cdots L_u^{-\alpha_1}R_v^{-\alpha_0}M.$$ By repeatedly applying Lemma~\ref{cf} parts (c) and (e), we get that $N=\begin{bmatrix}
    a' & b' \\
    c' & d'
    \end{bmatrix}$ with $b'/d'=1/(u\alpha_r),$ a contradiction since $N\in\mathscr{G}_{u,v}$.

So we must have that $(f_{u,v}\circ C)(b/d)=\llbracket v\alpha_0,u\alpha_1,\dots,v\alpha_{r-1}\rrbracket$ and, using the same definition for $N$ as above, $N=\begin{bmatrix}
    a' & 0 \\
    c' & d'
    \end{bmatrix}$. Since $N\in\mathscr{G}_{u.v}$, we must have that $a'=1$, $d'=1$ and $c'=u\alpha$ for some $\alpha\in\mathbb{Z},$ i.e., $N=L_u^\alpha$. Note that we cannot have $a'=-1$ and $d'=-1$, as these values are not congruent to $1\pmod{uv}$ as is required by the definition of $\mathscr{G}_{u,v}$. In particular, $M=R_v^{\alpha_0}L_u^{\alpha_1}\cdots R_v^{\alpha_{r-1}}L_u^\alpha.$ 

By repeatedly applying Lemma~\ref{cf} parts (d) and (e), we get that $c/a=[u\beta_0,v\beta_1,\dots,u\beta_k]$ for some $k\geq 0$, where $\beta_0\geq 0$ and $\beta_i>0$
for $0<i\leq k$. The result now follows from the fact that $C(c/a)=\llbracket u\beta_0,v\beta_1,\dots,u\beta_k\rrbracket$ by the definition of $C.$
\end{proof}

The following theorem represents our extension of Theorem 3.4 in~\cite{HMST}, where the proof has been modified accordingly and presented for completeness.

\begin{theorem}\label{sanovlikeGp}
For integers $u,v\geq 1$, with $u+v>4$, and a matrix $M=\begin{bmatrix}
    a & b \\
    c & d
    \end{bmatrix}\in \mathscr{G}_{u,v}$, $M\in G_{u,v}$ if and only if $\left(f_{u,v}\circ C\right)(b/d)$ satisfies the $(u,v)$-divisibility property. 
\end{theorem}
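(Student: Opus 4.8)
The plan is to mirror the strategy used for Theorem~\ref{sanovlikeGpOld} in~\cite{HMST}, routing everything through the Esbelin--Gutan characterization in Theorem~\ref{egGp} and the machinery of $f_{u,v}$, $g_{u,v}$, $C$, and $E$ developed in Section~2. The first step is to reduce to the case $u=v$: given $M\in\mathscr{G}_{u,v}$, consider the conjugation/embedding trick that turns $G_{u,v}$ into a subgroup where Theorem~\ref{egGp} applies, or (following~\cite{HMST}) observe that a matrix $M=\begin{bmatrix}a&b\\c&d\end{bmatrix}\in\mathscr{G}_{u,v}$ lies in $G_{u,v}$ precisely when a suitable associated matrix lies in $\mathscr{G}_{k,k}$ for $k=\lcm(u,v)$ or by a direct diagonal scaling argument; in either case Theorem~\ref{egGp} tells us $M\in G_{u,v}$ iff one of $c/a$, $b/d$ admits \emph{some} continued fraction expansion with all partial quotients in the appropriate lattice ($u\mathbb{Z}$ in the odd positions, $v\mathbb{Z}$ in the even positions). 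By Proposition~\ref{oneFracGroup} it suffices to work with $b/d$ alone.

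So the heart of the argument is the following equivalence: $b/d$ has \emph{a} continued fraction representation $[q_0,q_1,\dots,q_r]$ with the $(u,v)$-divisibility property if and only if $(f_{u,v}\circ C)(b/d)$, i.e. the specific representative in $A_2$ obtained by running $f_{u,v}$ on the short continued fraction of $b/d$, satisfies the $(u,v)$-divisibility property. The ``if'' direction is easy: if $(f_{u,v}\circ C)(b/d)$ has the divisibility property then by Corollary~\ref{efuv} it evaluates (under $E$) to $b/d$, so it \emph{is} such a representation. The ``only if'' direction is where the new functions earn their keep: given any divisibility-respecting representation $\llbracket q_0,\dots,q_r\rrbracket$, first normalize it into $A_2$ (pushing any $|q_i|\le 1$ entries out via Lemma~\ref{firstid}/Lemma~\ref{secondid} without destroying divisibility — this is exactly the content one needs to check carefully, and it may require allowing the $\pm 2$ patterns that Lemma~\ref{secondid} produces, which is precisely why $g_{u,v}$ has the $-2$ branches). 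Once we have an element of $A_2$ with the $(u,v)$-divisibility property, apply Proposition~\ref{fuvguv}: $f_{u,v}(g_{u',v'}(\cdot))$ is the identity on such elements. Taking $g_{u',v'}$ with $u'=v'$ large (so it reduces into $A_1$), and using Lemma~\ref{guvE} which identifies $g$ with $C\circ E$, we get that $(f_{u,v}\circ C)$ applied to $b/d = E$ of our element returns the element itself, hence has the divisibility property. Chaining these gives the equivalence.

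Assembling: $M\in G_{u,v}$ $\iff$ (Theorem~\ref{egGp}, after the $u=v$ reduction) $c/a$ or $b/d$ has a divisibility-respecting continued fraction $\iff$ (Proposition~\ref{oneFracGroup}) $(f_{u,v}\circ C)(b/d)$ has the $(u,v)$-divisibility property. The main obstacle I anticipate is the ``only if'' normalization step in the middle paragraph: taking an \emph{arbitrary} continued fraction with the divisibility property and massaging it, via the identities of Lemmas~\ref{firstid} and~\ref{secondid}, into the canonical $A_2$ form on which Proposition~\ref{fuvguv} can be invoked — in particular verifying that when $u=2$ or $v=2$ the presence of consecutive $1$'s (or $2$'s) is handled by the $q_1=2$ / $q_1=-2$ branches rather than leading to the kind of illegal haphazard removal warned about in the discussion after Theorem~\ref{sanovlikeGpOld}. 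One must check that the ``shape'' of allowable deviations from short form is exactly captured by $A_2$ together with the $\pm 2$-chains, and that $g_{u',v'}$ reconstitutes precisely those. The reduction to $k=\lcm(u,v)$ (or to $u=v$) is the other place to be careful, since $\mathscr{G}_{u,v}\ne\mathscr{G}_{k,k}$ in general, so one likely argues directly at the level of words in $L_u$, $R_v$ matching the alternating product structure, exactly as in~\cite{HMST}.
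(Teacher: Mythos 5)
Your proposal assembles the right ingredients (Corollary~\ref{efuv}, Lemma~\ref{guvE}, Proposition~\ref{fuvguv}, Proposition~\ref{oneFracGroup}), but the architecture you build around them differs from the paper's and leaves two genuine gaps, both of which the paper's proof avoids entirely.

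First, the opening reduction does not work. Theorem~\ref{egGp} is stated only for $\mathscr{G}_{k,k}$, and there is no conjugation or $\lcm$ trick that turns $G_{u,v}$ into some $G_{k,k}$ for general $u\neq v$: a diagonal conjugation by $\mathrm{diag}(1,t)$ sends $(u,v)$ to $(ut,v/t)$, so equalizing the parameters would require $t=\sqrt{v/u}$, which is not rational in general. You sense this at the end (``one likely argues directly at the level of words''), and indeed that is exactly what the paper does: since $G_{u,v}$ is free on $L_u,R_v$, write $M=R_v^{\alpha_0}L_u^{\alpha_1}\cdots R_v^{\alpha_{r-1}}L_u^{\alpha_r}$ with $\alpha_i\neq 0$ for $0<i<r$, and deduce (as in Theorem~3.4 of~\cite{HMST}) that $b/d=[v\alpha_0,u\alpha_1,\dots,v\alpha_{r-1}]$. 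Theorem~\ref{egGp} is never invoked.

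Second, and more importantly, your ``only if'' direction routes through an \emph{arbitrary} divisibility-respecting continued fraction of $b/d$ and then requires a normalization step into $A_2$, which you correctly flag as the main obstacle but do not carry out. The paper's key observation is that no normalization is needed: the specific representation $\llbracket v\alpha_0,u\alpha_1,\dots,v\alpha_{r-1}\rrbracket$ produced by the group word is \emph{already} in $A_2$, because the interior exponents satisfy $\alpha_i\neq 0$ and $u,v\geq 2$, so every interior entry has absolute value at least $2$; and it visibly satisfies the $(u,v)$-divisibility property. Lemma~\ref{guvE} then gives $g_{u,v}$ of this element equals $C(b/d)$, and Proposition~\ref{fuvguv} collapses $f_{u,v}\circ g_{u,v}$ to the identity, yielding $(f_{u,v}\circ C)(b/d)=\llbracket v\alpha_0,\dots,v\alpha_{r-1}\rrbracket$ directly. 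Your converse direction (from the divisibility of $(f_{u,v}\circ C)(b/d)$ back to membership, via Corollary~\ref{efuv} and rebuilding the word as in Proposition~3.1 of~\cite{HMST}) matches the paper's. To repair your write-up, discard the reduction to Theorem~\ref{egGp} and the normalization step, and start the forward direction from the free-product word for $M$.
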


\begin{proof}
$(\Rightarrow)$ Suppose that $M\in G_{u,v}$. Then $M=R_v^{\alpha_0}L_u^{\alpha_1}R_v^{\alpha_2}\cdots R_v^{\alpha_{r-1}}L_u^{\alpha_r}$ where $\alpha_i\in\mathbb{Z}_{\neq 0}$ for $0<i<r$, $\alpha_0,\alpha_r\in\mathbb{Z}$, and $r$ is odd. For $0
\leq i\leq r$, let $$\begin{bmatrix}
    a_i & b_i \\
    c_i & d_i
    \end{bmatrix}=\begin{cases}
    L_u^{\alpha_{r-i}}\cdots R_v^{\alpha_{r-1}}L_u^{\alpha_r} & \text{ if $i$ is even},\\
    R_v^{\alpha_{r-i}}\cdots R_v^{\alpha_{r-1}}L_u^{\alpha_r} & \text{ if $i$ is odd}.
    \end{cases}$$
Then $\begin{bmatrix}b_0\\d_0\end{bmatrix}=\begin{bmatrix}0\\1\end{bmatrix}$ and, by Lemma~\ref{cf} part (a), for $0<i\leq r$, $$\begin{bmatrix}b_i\\d_i\end{bmatrix} = \begin{cases}
\begin{bmatrix}b_{i-1}\\b_{i-1}u\alpha_{r-i}+d_{i-1}\end{bmatrix} & \text{ if $i$ is even},
\\[3ex]
\begin{bmatrix}b_{i-1}+d_{i-1}v\alpha_{r-i}\\d_{i-1}\end{bmatrix} &\text{ if $i$ is odd}.
\end{cases}$$
By repeatedly applying Lemma~\ref{cf} parts (d) and (e) to $b_i/d_i$, it follows that $b/d=b_r/d_r=[v\alpha_0,u\alpha_1,\dots,v\alpha_{r-1}].$ To complete this direction of the proof we must show that $(f_{u,v}\circ C)(b/d)=\llbracket v\alpha_0,u\alpha_1,\dots,v\alpha_{r-1}\rrbracket.$ We can assume that $r\geq 3$ since the result is trivial when $r=1.$

Since $\llbracket v\alpha_0,u\alpha_1,\dots,v\alpha_{r-1}\rrbracket\in A_2$, then by Lemma~\ref{guvE}, $g(\llbracket v\alpha_0,u\alpha_1,\dots,v\alpha_{r-1}\rrbracket)=C(b/d)$. By Proposition~\ref{fuvg}, we see that
\begin{align*}
(f_{u,v}\circ C)\left(\frac{b}{d}\right) &= (f_{u,v}\circ g)(\llbracket v\alpha_0,u\alpha_1,\dots,v\alpha_{r-1}\rrbracket)\\
&= \llbracket v\alpha_0,u\alpha_1,\dots,v\alpha_{r-1}\rrbracket.
\end{align*}

$(\Leftarrow)$ In this case we obtain the proof by following the same argument given in the first paragraph of the `if' argument in Proposition 3.1 in~\ref{oneFracGroup}.
\end{proof}

A careful reading of Theorem~\ref{sanovlikeGp} shows that our method, as in our previous version, allows one to determine the exponents in the alternating product representation of $M$, should it be the case that $M\in G_{u,v}$. Furthermore, the result given by Theorem~\ref{sanovlikeGp} can be extended to negative values for $u$ and $v$ since $L_{-u}=L_{u}^{-1}$ and $R_{-v}=R_{v}^{-1}$.


\subsection{Index Results}

In this section we compute the value of $[\mathscr{G}_{u,v}\colon G_{u,v}]$ for all $u,v\geq 1$. Before doing so we present some results on the relationship between $\mathscr{G}_{u,v}$ and $G_{u,v}$ when $u+v\leq 4$. 

\begin{lemma}\label{negig}
We have that $-I_2\in G_{u,v}$ if and only if $u+v\leq 3.$
\end{lemma}

\begin{proof}
($\Rightarrow$) Suppose that $-I_2\in G_{u,v}.$ Then $-I_2\in\mathscr{G}_{u,v}$. By the definition of $\mathscr{G}_{u,v}$ there is an integer $n$ such that $1+uvn=-1$. For this to occur we would need $uvn=-2$. It follows that $u+v\leq 3.$

($\Leftarrow$) This direction follows from the fact that $-I_2=(R_2L_1^{-1})^2=(L_2R_1^{-1})^2.$ 
\end{proof}

The following propositions show that we get a Sanov-like result when
$u+v=3$ or $4$.

\begin{proposition}\label{1-2Case}
If $M\in\mathscr{G}_{1,2}$, then $M\in G_{1,2}$.
\end{proposition}

\begin{proof}
Let $M=\begin{bmatrix}
    a & b \\
    c & d
    \end{bmatrix}\in \mathscr{G}_{1,2}$ and let $[q_0,q_1,\dots,q_r]$ be the short continued fraction representation of $b/d.$ Following the ideas in the proof of Theorem~\ref{sanovlikeGp}, we must show that this continued fraction representation can be manipulated to an equivalent form so that all coefficients with an even index are even.

    If $r=0$, then  the only possibilities for $d$ are $\pm 1$. When  $d=1$, we have $M=\begin{bmatrix}
    1+2n_1 & 2n_2 \\
    n_3 & 1
    \end{bmatrix}$ with $n_1=n_2n_3$.  In this case, $M=R_2^{n_2}L_1^{n_3}$, so $M\in G_{1,2}$. If $d=-1$, we apply the previous argument to $-M$, which gives that $M=-R_2^{n_2}L_1^{n_3}\in G_{1,2}$. 
    
    Suppose $r>0$. By repeatedly applying Lemma~\ref{secondid} from left to right with $\alpha$ representing the leftmost odd partial quotient in the continued fraction representation of $b/d,$ we obtain a continued fraction representation $$\frac{b}{d}=[\alpha_0,\alpha_1,\dots,\alpha_{s-1},\alpha_s],$$ where $\alpha_i$ is even whenever $i\equiv 0\bmod 2$ and $0\leq i \leq s-2$.  
If $s$ is odd,
$$[\alpha_{s-1}, \alpha_{s}]    
=\begin{cases}
[\alpha_{s-1}, \alpha_{s}] &\text{ if $\alpha_{s-1}$ is even},\\
[\alpha_{s-1}+1,-1,-(\alpha_{s}-1)] &\text{ if $\alpha_{s-1}$ is odd and $\alpha_{s}$ is odd},\\
[\alpha_{s-1}+1,-1,-\alpha_{s},1] &\text{ if $\alpha_{s-1}$ is odd and $\alpha_{s}$ is even}.\\
\end{cases}$$
If $s$ is even,
$$[\alpha_{s-1}, \alpha_{s}]    
=\begin{cases}
[\alpha_{s-1}, \alpha_{s}] &\text{ if $\alpha_{s}$ is even},\\
[\alpha_{s-1}+1] &\text{ if $\alpha_{s}=1$},\\
[\alpha_{s-1}, \alpha_{s}-1,1] &\text{ if $\alpha_{s}$ is odd and $\neq 1$}.\\
\end{cases}$$
The cases follow directly from the short and long continued fraction representations $[\alpha, \beta]=[\alpha, \beta-1, 1]$ and Lemma~\ref{firstid}(b). 
\end{proof}

\begin{proposition}\label{1-3Case}
If $M\in\mathscr{G}_{1,3}$, then $M\in G_{1,3}$.
\end{proposition}

\begin{proof}
Let $M=\begin{bmatrix}
    a & b \\
    c & d
    \end{bmatrix}\in \mathscr{G}_{1,3}$ and let $[q_0,q_1,\dots,q_r]$ be the short continued fraction representation of $b/d.$ As in the previous proof, we must show that this continued fraction representation can be manipulated to an equivalent form so that all coefficients with an even index are divisible by $3$. We will prove the desired result by strong induction on r.
    
    If $r=0$, then  it must be the case that $d=1$. We therefore have $M=\begin{bmatrix}
    1+3n_1 & 3n_2 \\
    n_3 & 1
    \end{bmatrix}$ with $n_1=n_2n_3$.  In this case, $M=R_3^{n_2}L_1^{n_3}$, so $M\in G_{1,3}$. If $r=1$, then $b/d=[q_0,q_1]$ with $q_1\neq\pm 1$ and $q_0\equiv \pm 1\bmod{3}$. Suppose that $q_0\equiv 1\bmod{3}$. Then $q_0=3\alpha_0+1$ for some integer $\alpha_0$ and it follows that
    \begin{align*}
        [q_0,q_1] &= [3\alpha_0,1,-(q_1+1)]\\
        &= \begin{cases}
            [3\alpha_0,1,-q_1,-1]&\text{ if $q_1\equiv 0\bmod{3}$,}\\
            [3\alpha_0,1,-(q_1+2),1]&\text{ if $q_1\equiv 1\bmod{3}$,}\\
            [3\alpha_0,1,-(q_1+1)]&\text{ if $q_1\equiv -1\bmod{3}$.}
        \end{cases}
    \end{align*}
A similar argument works if $q_0\equiv -1 \bmod{3}.$

Suppose that the result holds for $0\leq r< t$ for some $t\geq 2$. Then $b/d=[q_0,q_1,\dots,q_r]$ with $r\geq 2$. If $q_0\equiv 0\bmod{3}$, then the problem reduces to the case $[q_2,\dots,q_r]$ for the matrix $L_1^{-q_1}R_3^{-q_0/3}M$. So we can assume that $q_0\equiv \pm 1\bmod{3}.$ Suppose that $q_0\equiv 1\bmod{3}$. Then $q_0=3\alpha_0+1$ for some integer $\alpha_0$ and by Lemma~\ref{firstid}(a), we have
\begin{align*}
    b/d &=[q_0,q_1,\dots,q_r]\\
    &=[3\alpha_0+1,q_1,\dots,q_r]\\
    &=[3\alpha_0,1,-(q_1+1),-q_2,\dots,-q_r].
\end{align*}
As in the $r=1$ base case, this reduces the problem to the matrix $L_1^{-1}R_3^{-\alpha_0}M$, which holds by induction. A similar argument works if $q_0\equiv -1 \bmod{3}.$ So the result holds by induction.
\end{proof}

Propositions~\ref{1-2Case} and~\ref{1-3Case} together with Sanov's result show that $G_{u,v}=\mathscr{G}_{u,v}$ when $u+v\leq 4$. The theorem below shows that there are no other cases where this is true.

\begin{theorem}\label{Guv.equal.sGuv}
We have that $G_{u,v}=\mathscr{G}_{u,v}$ if and only if $u+v\leq 4$. 
\end{theorem}

\begin{proof}
($\Rightarrow$) Suppose $u+v>4$. Assume $v\geq u\geq 2$. Let $M_{u,v}(k)=\begin{bmatrix}
    1-uvk & uvk \\
    -uvk & 1+uvk
    \end{bmatrix}$ where $k$ is a positive integer. Then $M_{u,v}(k)\in \mathscr{G}_{u,v}$ with
\begin{align*}
    (f_{u,v}\circ C)\left(\frac{uvk}{1+uvk}\right) &= f_{u,v}(\llbracket 0,1,uvk\rrbracket)\\
    &= \llbracket 0,1,uvk\rrbracket.
\end{align*}
By Theorem~\ref{sanovlikeGp}, $M_{u,v}(k)\not\in G_{u,v}$. Now assume $v\geq 4$ and $u=1$. Let $N_v(k)=\begin{bmatrix}
    1+vk & -vk^2 \\
    v & 1-vk
    \end{bmatrix}$ where $k>2$ is a positive integer with $k\equiv 2\bmod{v}$. Then $N_v(k)\in \mathscr{G}_{1,v}$ with
\begin{align*}
    (f_{1,v}\circ C)\left(\frac{-vk^2}{1-vk}\right) &= 
        f_{1,v}(\llbracket k,v-1,1,k-1\rrbracket)\\
    &= \llbracket k\rrbracket\oplus f_{v,1}(\llbracket v-1,1,k-1\rrbracket).
\end{align*} 
By Theorem~\ref{sanovlikeGp}, $N_v(k)\not\in G_{u,v}$.

We therefore obtain the desired result by contraposition.

($\Leftarrow$) Sanov's result together with Propositions~\ref{1-2Case} and~\ref{1-3Case} immediately give the desired result.
\end{proof}

\begin{theorem}
We have that  
\begin{align*}
[\mathscr{G}_{u,v}\colon G_{u,v}]=
\begin{cases}
1 & u+v\leq 4,\\
\infty &\text{otherwise.}
\end{cases}
\end{align*}
\end{theorem}

\begin{proof}
From Theorem~\ref{Guv.equal.sGuv} we get the desired result when $u+v\leq 4$, so we may assume otherwise.

For $v\geq u\geq 2$ we use the fact that $M_{u,v}(k)=\left(M_{u,v}(1)\right)^k$. Suppose that $k_1,k_2\in\mathbb{Z}$ are such that $M_{u,v}(k_1)$ and $M_{u,v}(k_2)$ are representatives of the same coset. That is, assume
\begin{equation*}
    M_{u,v}(k_1)G_{u,v} = M_{u,v}(k_2)G_{u,v}.
\end{equation*}
Without loss of generality, suppose $k_1\geq k_2$. The above equality occurs if and only if $M_{u,v}(k_1)(M_{u,v}(k_2))^{-1}\in G_{u,v}$. But
\begin{align*}
M_{u,v}(k_1)(M_{u,v}(k_2))^{-1} &= (M_{u,v}(1))^{k_1}(M_{u,v}(1))^{-k_2}\\
&= (M_{u,v}(1))^{k_1-k_2}\\
&= M_{u,v}(k_1-k_2).
\end{align*}
For $M_{u,v}(k_1-k_2)\in G_{u,v}$ to hold, it must be that $k_1=k_2$. This means that $\{M_{u,v}(k)G_{u,v}:k\geq 0\}$ represents an infinite family of distinct left cosets.

For $v\geq 4$ and $u=1$, we use the fact that if $N_v(k_1)\left(N_v(k_2)\right)^{-1}=\begin{bmatrix}
    a & b \\
    c & d
    \end{bmatrix}$, then $c/a=[0,k_1,v,k_2-k_1,-v]$ where $k_1,k_2\equiv 2\bmod{v}$. Suppose that $k_1,k_2\in\mathbb{Z}$ are such that $N_v(k_1)$ and $N_v(k_2)$ are representatives of the same coset. That is, assume
\begin{equation*}
    N_v(k_1)G_{u,v} = N_v(k_2)G_{u,v}.
\end{equation*}
Without loss of generality, suppose $k_1\leq k_2$. Based on the computations in the proof of Theorem~\ref{Guv.equal.sGuv}, the above equality occurs if and only if $N_v(k_1)\left(N_v(k_2)\right)^{-1}\in G_{u,v}$, in contradiction to Theorem~\ref{sanovlikeGp}.
This means that $\{N_v(k)G_{u,v}:k\geq 0,k\equiv2\bmod{v}\}$ represents an infinite family of distinct left cosets.
\end{proof}

\section{Example}

The following example shows our motivation for defining $f_{u,v}$ in order to extend our previous results.

Consider the matrix $M=\begin{bmatrix}2401 & 12975 \\ 250 & 1351 \end{bmatrix}=R_3^{3}L_2R_3^{-1}L_2^{5}R_3L_2^{-1}R_3^{2}.$ Applying $f\circ C$ to $\frac{12975}{1351}$ as was done in~\cite{HMST} gives that
\begin{align*}
    (f\circ C)\left(\frac{12975}{1351}\right) 
    &= \llbracket10,-3,2,9,2,2,5\rrbracket,
\end{align*}
which clearly does not satisfy the $(2,3)$-divisibility property despite the fact that $M\in G_{2,3}.$ The main issue here is the way that the 1's were eliminated. Using $f_{2,3}$ in place of $f$ gives that
\begin{align*}
    (f_{2,3}\circ C)\left(\frac{12975}{1351}\right) &= f_{2,3}(\llbracket9,1,1,1,1,9,2,2,5\rrbracket)\\
    &= \llbracket9\rrbracket\oplus f_{3,2}(\llbracket1,1,1,1,9,2,2,5\rrbracket)\\
    &= \llbracket9\rrbracket\oplus(\llbracket2\rrbracket\oplus -f_{2,3}(\llbracket2,1,9,2,2,5\rrbracket))\\
    &= \llbracket9\rrbracket\oplus(\llbracket2\rrbracket\oplus-(\llbracket3\rrbracket\oplus-f_{3,2}(\llbracket10,2,2,5\rrbracket)))\\
    &= \llbracket9\rrbracket\oplus(\llbracket2\rrbracket\oplus-(\llbracket3\rrbracket\oplus-(\llbracket10\rrbracket\oplus f_{2,3}(\llbracket2,2,5\rrbracket))))\\
    &= \llbracket9\rrbracket\oplus(\llbracket2\rrbracket\oplus-(\llbracket3\rrbracket\oplus-(\llbracket10\rrbracket\oplus (\llbracket3\rrbracket\oplus f_{3,2}(\llbracket-2,6\rrbracket)))))\\
    &= \llbracket9\rrbracket\oplus(\llbracket2\rrbracket\oplus-(\llbracket3\rrbracket\oplus-(\llbracket10\rrbracket\oplus (\llbracket3,-2\rrbracket\oplus f_{2,3}(\llbracket6\rrbracket)))))\\
    &= \llbracket9\rrbracket\oplus(\llbracket2\rrbracket\oplus-(\llbracket3\rrbracket\oplus-(\llbracket10\rrbracket\oplus (\llbracket3,-2\rrbracket\oplus \llbracket6\rrbracket))))\\
    &= \llbracket9,2,-3,10,3,-2,6\rrbracket,
\end{align*}
which does satisfy the $(2,3)$-divisibility property, as desired, and encodes the exponents in the product representation of $M$.

\section{Concluding Remarks}

The definition of $f_{u,v}$ shows that the function always changes the value of the partial quotients by no more than 2 in absolute value. In effect, the function $f_{u,v}$ takes in a vector and attempts to output an equivalent vector satisfying the $(u,v)$-divisibility property by adjusting the components according to Lemmas~\ref{firstid} and~\ref{secondid}. Two questions may come to mind at this stage: 
\begin{itemize}
    \item Why are Lemmas~\ref{firstid} and~\ref{secondid} enough for our purposes?
    \item Is there some version of Lemmas~\ref{firstid} and~\ref{secondid} that can adjust the components by more?
\end{itemize}
Both questions can be addressed by noting that 
\begin{equation}\label{kadjust}
    [\alpha,\beta,\gamma]=[\alpha+k,1,-1,k+1,-\beta,-\gamma]
\end{equation} 
for $k\in\mathbb{Z}.$ This identity shows that attempting to adjust a component by a `large' value of $k$ may introduce new components that need to be adjusted as well. In some cases, this is problematic. For example, consider $\llbracket2,3\rrbracket$ and suppose that we are attempting to find some equivalent vector that satisfies the $(8,1)$-divisibility property. Using~\eqref{kadjust}, we see that we can indeed raise the 2 to a multiple of 8, giving us the equivalent vector $\llbracket8,1,-1,7,-3\rrbracket.$ Unfortunately this creates an endless set of problems each time that we attempt to fix every other component. 

By applying Lemma~\ref{firstid}(b) repeatedly, we get 

\begin{align*}
[\alpha,\beta,\gamma] &= [\alpha+k,-1, \underbrace{2,-2,2,\dots,\pm 2}_{k-1\text{ terms}},(-1)^{k}(\beta-1), (-1)^{k}\gamma].
\end{align*}
When $u=1$ and $v = 4$, this continued fraction sequence is already ``beyond repair" in the sense that we are unable to make $[2,-2,2,\dots, \pm2]$ satisfy $(1,4)$-divisibility within a finite number of steps. By applying Lemma~\ref{firstid}(b) twice, we have 
$$[2, -2] \longrightarrow [3, -1, 2] \longrightarrow [4, -1, 2, -2].$$
The reappearance of $[2, -2]$ in the ``corrected" sequence shows that the $(1,4)$-divisibility can never be reached.


\end{document}